  \newcounter{constant}
\def\arraypar#1{\parbox[c]{\textwidth - 2cm}{\centering #1}}
\newcommand{\I}{\mathds{1}}
\newcommand{\dist}{\ensuremath{\mathop{\mathrm{dist}}\nolimits}}
\newcommand{\Cov}{\mathop{\mathrm{Cov}}\nolimits}
\newcommand{\norm}[1]{\left\lVert#1\right\rVert}
\newcommand{\super}[1]{\ensuremath{^{\textnormal{\scriptsize#1}}}}
\newcommand{\pcb}{p_c\super{bond}}
\newcommand{\pcs}{p_c\super{site}}
\newcommand{\dual}[1]{\smash{{(#1)}^{\ast}}}
\newcommand{\PPs}{\mathbb{P}\rule{0pt}{8pt}\super{\,s}}
\newcommand{\PPb}{\mathbb{P}\rule{0pt}{8pt}\super{\,b}}
\newcommand{\cC}{\ensuremath{\mathcal{C}}}
\newcommand{\cL}{\ensuremath{\mathcal{L}}}
\newcommand{\cN}{\ensuremath{\mathcal{N}}}
\newcommand{\cO}{\ensuremath{\mathcal{O}}}
\newcommand{\EE}{\ensuremath{\mathbb{E}}}
\newcommand{\LL}{\ensuremath{\mathbb{L}}}
\newcommand{\NN}{\ensuremath{\mathbb{N}}}
\newcommand{\PP}{\ensuremath{\mathbb{P}}}
\newcommand{\RR}{\ensuremath{\mathbb{R}}}
\newcommand{\ZZ}{\ensuremath{\mathbb{Z}}}
\theoremstyle{plain}
\newtheorem{prop}{Proposition}
\newtheorem{teo}{Theorem}
\newtheorem{lema}{Lemma}
\theoremstyle{definition}
\theoremstyle{remark}
\newtheorem{remark}{Remark}
\title{A note on the phase transition for independent alignment percolation}
\author[1]{Marcelo Hil\'ario}
\author[2]{Daniel Ungaretti}
\affil[1,2]{Universidade Federal de Minas Gerais}
\begin{document}
\maketitle

\begin{abstract}
    We study the independent alignment percolation model on
    $\mathbb{Z}^d$ introduced by Beaton, Grimmett and Holmes [arXiv:1908.07203]. 
    It is a model
    for random intersecting line segments defined as follows. First the sites
    of $\mathbb{Z}^d$ are independently declared occupied with probability $p$
    and vacant otherwise. Conditional on the configuration of occupied
    vertices, consider the set of all line segments that are parallel to the
    coordinate axis, whose extremes are occupied vertices and that do not
    traverse any other occupied vertex. Declare independently the segments on
    this set open with probability $\lambda$ and closed otherwise. All the
    edges that lie on open segments are also declared open giving rise to a
    bond percolation model in $\mathbb{Z}^d$. We show that for any $d \geq 2$
    and $p \in (0,1]$ the critical value for $\lambda$ satisfies
    $\lambda_c(p)<1$ completing the proof that the phase transition is
    non-trivial over the whole interval $(0,1]$.
    We also show that the critical curve $p \mapsto \lambda_c(p)$ is
    continuous at $p=1$, answering a question posed by the authors in
    [arXiv:1908.07203].
\end{abstract}

%
%



\section{Definition of the model}
\label{sec:definition_of_the_model}

Alignment percolation has been recently introduced by Beaton, Grimmett and
Holmes \cite{beaton2019alignment} as a model of random line segments in the
hypercubic lattice $\LL^d = (\ZZ^d, \EE^d)$ with $d \geq 2$. There, the
authors define two versions of the model, referred to as the `one-choice model'
and the `independent model' as we revisit below.

Fix $d \geq 2$ and let $\Omega := \{0,1\}^{\ZZ^d}$ be the set of configurations
$\omega$ on $\ZZ^d$.  Given $\omega = \{ \omega(v) ; v\in \mathbb{Z}^d\} \in
\Omega$ a site $v \in \ZZ^d$ is said \textit{occupied} when $\omega(v) = 1$ and
\textit{vacant} when $\omega(v) = 0$.
For any parameter $p \in (0, 1]$ the independent Bernoulli site percolation
model is the measure $\PPs_p$ on $\Omega$ under which $\{\omega(v);
v\in \mathbb{Z}^d\}$ is a family of independent Bernoulli random variables with
mean $p$. In other words, any site $v \in \ZZ^d$ independently is occupied or
vacant with probability $p$ and $1-p$, respectively.

Denote $\eta(\omega) = \{v \in \ZZ^d;\; \omega(v) = 1\}$. 
For a fixed configuration $\omega$, we say that a pair of sites
$v_1, v_2 \in \ZZ^d$ is \textit{feasible} when $v_1$ and $v_2$ differ only at
a single coordinate and both $v_1$ and $v_2$ belong to $\eta$ but no other
site in the line segment $v_1 v_2$ that connects $v_1$ and $v_2$ belongs to
$\eta$. We denote the set of feasible pairs by $F(\eta)$ and define a
random graph $G = G(\omega)$ whose vertex set is the set of occupied vertices in
$\omega$ and whose edges are the feasible pairs; that is,
$G(\omega) = (\eta, F(\eta))$.

In \cite{beaton2019alignment} two bond percolation models on $G$ are
defined by specifying measures on $\{0, 1\}^{F(\eta)}$:
\begin{description}
\item[One-choice model.] For every site $v \in \eta$ choose a feasible pair
    $f_v = vu \in F(\eta)$ uniformly among all $2d$ available possibilities. 
    Declare $e = uv \in F(\eta)$ open if either $e = f_u$ or $e = f_v$.
    Otherwise, declare it closed.

\item[Independent model.] Fix $\lambda \in [0,1]$ and declare each edge
    $e \in F(\eta)$ to be open independently with probability $\lambda$.
\end{description}

Both versions described above can be regarded as dependent bond percolation
models on $(\ZZ^d, \EE^{d})$ by declaring an edge $e \in \EE^d$ open if and
only if the unique edge of $F(\eta)$ that contains $e$ is open.  This gives
rise to a random element $\sigma$ in $\Sigma := \{0, 1\}^{\smash{\EE^{d}}}$
which in turn induces a random subgraph of $\mathbb{Z}^d$. 
We are mainly interested in studying connectivity properties of this random graph.

Here we focus in the independent version of the model. In this context, let
$P^{\omega}_{\lambda}$ denote the measure on $\Sigma$ induced by
independent Bernoulli percolation on $\{0,1\}^{F(\eta)}$ with parameter $\lambda$.
The distribution of the pair $\xi :=(\omega, \sigma)$ is
the probability measure $\mathbb{P}_{p,\lambda,d}$ on
$\Xi := \Omega \times \Sigma$ satisfying
\begin{equation*}
\mathbb{P}_{p, \lambda, d} (A \times B)
    := \int_{A} P^{\omega}_{\lambda}(B) \,\mathrm{d} \PPs_p(\omega)
    \quad \text{ $A \subset \Omega$, $B \subset \Sigma$ measurable},
\end{equation*}
where $\Omega$ and $\Sigma$ are endowed with their respective cylinder
$\sigma$-fields. We will often drop the dependency in $d$ from the notation
and write simply $\mathbb{P}_{p,\lambda}$.
In what follows we may abuse notation and write simply $\xi(v)$ and
$\xi(e)$ in order to refer to the state of a site $v$ and an edge $e$,
that is, $\omega(v)$ and $\sigma(e)$, respectively.

The set of \textit{open edges} of $\LL^d$ is denoted by $\cO = \{e \in \EE^d;\;
\xi(e)=1\}$ and the set of \textit{closed edges} by $\cC = \EE^d \setminus
\cO$. A site $v \in \ZZ^d$ \textit{percolates} if it belongs to an infinite
connected component of $\cO$. We say that $\cO$ \textit{percolates} if there
is a site $v$ that percolates.

A standard coupling shows that the model is monotone in $\lambda$, see
Section~\ref{sec:properties}. 
Thus, it is reasonable to expect that the model exhibits a non-trivial
phase transition in parameter $\lambda$.
Define
\begin{equation*}
\theta(p, \lambda, d)
    := \PP_{p, \lambda, d}(o\ \text{percolates})
    \quad \text{and} \quad
\lambda_c(p, d)
    := \sup \{\lambda \geq 0;\; \theta(p, \lambda, d)= 0\},
\end{equation*}
where $o$ denotes the origin in $\mathbb{Z}^d$.
We denote by $\pcs(d)$ and $\pcb(d)$ the critical points for site and bond
independent percolation on $\LL^{d}$.
It follows from \cite[Theorem~2.4]{beaton2019alignment} that
$\lambda_c(p, d) \geq \frac{p}{2d-1}> 0$ and
also that there is a universal $p^{\ast} > 0$ such that
\begin{equation*}
    \lambda_c(p, d) < 1
    \quad \text{if} \quad
    p < p^{\ast} \quad \text{or} \quad p > \pcs(d).
\end{equation*}
Since $\pcs(d) \sim \frac{1}{2d}$ and is strictly decreasing
on $d$ we know there is some $d_0(p^{\ast})$ such that for
$d \geq d_0(p^{\ast})$ we have $\lambda_c(p, d) \in (0, 1)$.
However, since we do not have a good control on $p^{\ast}$ it may
be the case that $p^{\ast} < \pcs(d)$.
Therefore, their result does not rule out that $\lambda_c(p, d)= 1$ on
$[p^{\ast}, \pcs(d)]$. In Theorem~\ref{teo:lambda_critical_non_trivial}
we bridge this gap providing non-trivial upper bounds on $\lambda_c(p,d)$
for every $p \in (0,1]$. For $d \geq 3$ it is not hard to find an upper
bound which holds uniformly over all the interval $(0,1]$ by a simple
comparison to Bernoulli percolation on the hexagonal lattice.
For the case $d=2$, that seems to be more delicate,
we employ renormalization methods in order to obtain the result.
\begin{teo}
\label{teo:lambda_critical_non_trivial}
Let $p_c^{\normalfont \hexagon} = 1 - 2 \sin(\pi/18)$ be the critical point for independent
bond percolation on the hexagonal lattice%
\footnote{For the hexagonal lattice the critical point is known exactly,
    see \cite[Chapter~3]{grimmett1999percolation}}.
For any $p \in (0,1]$ and $d \geq 3$ we have that
\begin{equation}
\label{eq:lambda_compared_hexagon_lattice}
\lambda_c(p, d) \le p_c^{\normalfont \hexagon}.
\end{equation}
Moreover, for every $p \in (0, 1]$ there is $\lambda_0(p) \in (0, 1)$
such that $\lambda_c(p, 2) \le \lambda_0$.
\end{teo}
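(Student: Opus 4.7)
The two parts of the theorem require different approaches.

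\emph{Part~1 ($d \ge 3$).} The plan is a direct stochastic comparison with Bernoulli bond percolation on the hexagonal lattice. The key is to exhibit, inside the feasible-pair graph on $\eta$, a sub-family $H$ of feasible pairs such that (a) $H$ is isomorphic to the hexagonal lattice and (b) distinct edges of $H$ correspond to distinct feasible pairs. Property (b) is crucial because, conditional on the occupied configuration $\eta$, the kernel $P^{\omega}_{\lambda}$ declares distinct feasible pairs open independently with probability $\lambda$. Hence, conditional on $\eta$, the restriction of alignment percolation to the edges of $H$ is distributed as Bernoulli$(\lambda)$ bond percolation on the hexagonal lattice, which percolates a.s.\ for $\lambda > p_c^{\normalfont \hexagon}$, yielding alignment percolation in turn. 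The hypothesis $d \ge 3$ ensures enough coordinate directions per site to realize the $3$-regular hexagonal structure without forcing two of the chosen feasible pairs to coincide, and an explicit embedding can be given by suitably selecting three of the $2d$ coordinate directions at each site in a brick-wall-type pattern.

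\emph{Part~2 ($d = 2$).} For dimension two I plan a block-renormalization argument. Fix $p \in (0,1]$ and, for $N$ large, set $B_N = [-N,N]^2$. Introduce a \emph{good block} event $G_N$, measurable with respect to the configuration in (a slight enlargement of) $B_N$, encoding the existence of open crossings of $B_N$ in both coordinate directions with enough redundancy to guarantee that two adjacent good blocks are connected inside their union. The steps are then: (i) show that $\mathbb{P}_{p,\lambda}(G_N) \to 1$ as $N\to\infty$ and $\lambda \uparrow 1$, using that for $\lambda = 1$ every feasible pair is a.s.\ open (each coordinate line a.s.\ contains occupied sites on both sides of any edge) so $B_N$ contains many disjoint crossings, and then extend by a monotonicity/continuity argument in $\lambda$ to $\lambda$ close to $1$; (ii) verify that the good-block family $\{G_B\}$ is $k$-dependent for some finite $k$, controlled by the fact that the length of a feasible pair crossing a boundary has a geometric-$p$ tail; (iii) apply the Liggett--Schonmann--Stacey lemma to stochastically dominate the good-block process by a Bernoulli site percolation on $\mathbb{Z}^2$ at a supercritical parameter, which yields an infinite cluster that lifts to alignment percolation and gives $\lambda_c(p, 2) \le \lambda_0(p) < 1$.

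The hardest step is (i) above. When $p$ is close to $0$, occupied sites are sparse and feasible pairs can span very long segments, so a single closed feasible pair could in principle disconnect the entire block. The event $G_N$ must therefore be designed with sufficient crossing redundancy---and the scale $N$ chosen large in terms of $p$---for $\mathbb{P}_{p,\lambda}(G_N)$ to approach $1$ as $\lambda \uparrow 1$ uniformly in the sparsity regime. In Part~1 the main technical point is instead to write down an explicit embedding of the hexagonal lattice by pairwise distinct feasible pairs, and this is precisely where the hypothesis $d \ge 3$ enters the argument.
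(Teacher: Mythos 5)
Your Part~1 is essentially the paper's argument: one fixes a \emph{deterministic} copy of the hexagonal lattice inside $\ZZ^3$ whose edges lie on pairwise distinct coordinate lines, so that each edge is contained in a distinct feasible pair for every $\eta$; conditionally on $\eta$ the states of these edges are then i.i.d.\ Bernoulli($\lambda$), and percolation follows for $\lambda>p_c^{\hexagon}$, uniformly in $p$. The paper realizes this by taking the vertices in two adjacent diagonal planes $\{x+y+z=k\}$ and $\{x+y+z=k+1\}$ with the nearest-neighbour edges between them: every coordinate line meets this pair of planes in exactly one adjacent pair of points, so each line carries exactly one edge of the embedded lattice. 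Be careful with your ``brick-wall-type pattern'': a brick wall drawn inside a coordinate plane puts consecutive collinear edges on the same coordinate line, and those can belong to the same feasible pair, which is precisely why this route is closed in $d=2$; the property you must engineer is that each coordinate line contains at most one chosen edge, and you have not written down an embedding that does this, so that step is asserted rather than proved.

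\textbf{On the case $d=2$.} Here you take a genuinely different route from the paper (which runs a multiscale renormalization on dual circuits, using a covariance bound with rate $\alpha(p)=-\log(1-p)$ and a trigger at $\lambda$ close to $1$), and your plan has a concrete gap at step (ii): the good-block field, if defined through the actual open crossings of $B_N$, is \emph{not} $k$-dependent for any finite $k$. The state of an edge near the boundary of a block depends on the occupied configuration arbitrarily far along its line (a long vacant stretch between two blocks correlates both), so the geometric tail of the feasible-pair length gives exponentially decaying influence, not exact finite-range dependence, and the Liggett--Schonmann--Stacey theorem requires the latter. Two repairs are available. One is to localize the block event: demand crossings built only from feasible pairs entirely contained in the enlarged block (both endpoints occupied inside it, no occupied site strictly between -- all conditions on sites inside the enlargement -- together with the opening variable of that segment); such segments are genuine open connections of the model, the event is measurable with respect to the sites and segment variables of the enlargement, and blocks with disjoint enlargements become exactly independent, after which your steps (i) and (iii) go through (fix $p$, take $N$ large so the local configuration is dense enough, then $\lambda$ close to $1$ so that the at most $O(N^2)$ within-block feasible pairs are all open). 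The other is to accept the unbounded range and control it by a decoupling scheme, which is exactly what the paper's multiscale argument with its decay-of-correlations lemma does; compared with your one-scale LSS approach, the multiscale route avoids any truncation of the feasible pairs at the price of a heavier induction, while your (repaired) approach is more elementary but needs the localized block events to certify real connections of the model.
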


\begin{figure}[ht]
\centering
\begin{tikzpicture}[scale=5,
    bgh/.style={blue, line width=1pt},
    new/.style={red, line width=2pt},
    aux/.style={dashed, gray}
    ]
    \draw[->] (-.15,   0) -- (1.1,   0) node[below] {$p$};
    \draw[->] (  0, -.15) -- (  0, 1.1) node[left] {$\lambda$};

    \coordinate (past0) at (.2,0) {};
    \coordinate (past1) at (.2,1) {};
    \coordinate (pcs0)  at (.4,0) {};
    \coordinate (pcs1)  at (.4,1) {};
    \coordinate (o)     at ( 0,0) {};
    \coordinate (pchex0) at ( 0,0.6527) {};
    \coordinate (pchex1) at ( 1,0.6527) {};
    \coordinate (pcb0)  at ( 0,0.4) {};
    \coordinate (pcb1)  at ( 1,0.4) {};
    \draw[aux] (1,1) -- (1,0);
    \draw[aux] (1,1) -- (0,1);
    \draw[aux] (pcb0) -- (pcb1);
    \draw[aux] (pcs0) -- (pcs1);
    \draw[aux] (past0) -- (past1);
    \draw[new] (pchex0) -- (pchex1);

    \draw (past1) ++(0,-1) -- ++(0,-.05) node[below] {$p^{\ast}$};
    \draw (pcs1)   ++(0,-1) -- ++(0,-.05) node[below] {$\pcs(d)$};
    \draw (1,0) -- (1,-.05) node[below] {$1$};

    \draw (0,1)   -- ++ (-.05,0) node[left] {$1$};
    \draw (pchex0) -- ++ (-.05,0) node[left] {$p_c^{\hexagon}$};
    \draw (pcb0)  -- ++ (-.05,0) node[left] {$\pcb(d)$};

    \draw[bgh] (o) -- (1, .3333); 
    \draw[bgh] plot [smooth]
    coordinates {(o) (0.05,0.15) (0.15,0.6527)}
    --(0.15,0.6527) -- (past1)
    --plot [smooth] coordinates {(pcs1) (0.5, 0.8) (0.6,0.6527)}
    -- plot [smooth] coordinates {(0.6,0.6527) (0.8,0.5) (pcb1)};

    \fill[black!50, opacity = .5] plot [smooth]
    coordinates {(o) (0.05,0.15) (0.15,0.6527)}
    --(0.6,0.6527)
    -- plot [smooth] coordinates {(0.6,0.6527) (0.8,0.5) (pcb1)}
    --(1, .3333)
    --cycle;
\end{tikzpicture}
\caption{For $d \geq 3$, Theorem~\ref{teo:lambda_critical_non_trivial}
    complements Figure~2.1 of \cite{beaton2019alignment} with an explicit upper
    bound; the critical curve is inside the highlighted region.
    For $d = 2$ we do not have explicit bounds but we ensure the
    critical curve is non-trivial in the interval $[p^*, p_c^{\text{site}}(2)]$.}
\label{fig:phase_transition_diagram}
\end{figure}
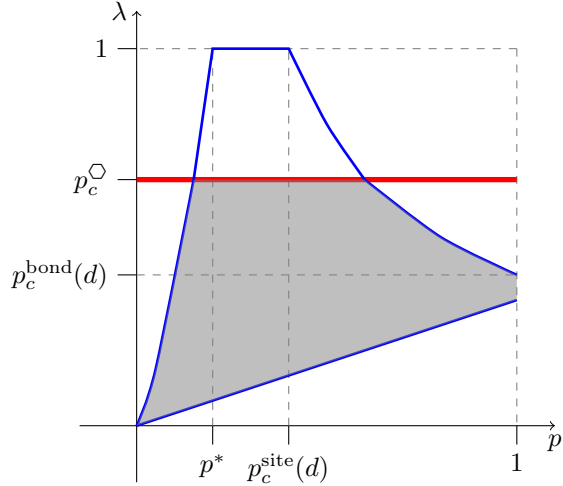

In~\cite{beaton2019alignment} the authors conjecture that the function
$p \mapsto \lambda_c(p, d)$ is non-increasing and continuous, as suggested by
numerical simulation. As already mentioned there, one difficulty in obtaining
results in this direction stems from the fact that it seems hard to couple the
model with different values of $p$ and still keep control of how the
connectivity changes, see Section~\ref{sec:properties}. Currently, there is
not much progress in proving strict monotonicity and continuity of
$\lambda_c(\cdot,d)$, however, our renormalization arguments can
be used to prove continuity as $p$ approaches $1$.

\begin{teo}
\label{teo:continuity_at_1}
Fix $d \geq 2$. 
For every $\lambda < \pcb(d)$ there exists $p_0(\lambda, d) \in (0, 1)$ such that
\begin{equation}
\label{eq:continuity_at_1}
\PP_{p, \lambda, d}(\text{$\cO$ percolates}) = 0
\quad \text{for every $p > p_0$.}
\end{equation}
In particular, the function $p \mapsto \lambda_c(p, d)$ is continuous at $p=1$.
\end{teo}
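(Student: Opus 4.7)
My plan is a block renormalization that locally compares the model to subcritical Bernoulli bond percolation. At $p = 1$ the model coincides with Bernoulli($\lambda$) bond percolation on $\LL^d$, which is subcritical since $\lambda < \pcb(d)$; the renormalization extends this to $p$ slightly less than $1$. Fix integer scales $1 \leq L' < L$ to be chosen at the end, and partition $\ZZ^d$ into blocks $B_{\mathbf{k}} := (\mathbf{k}L + [0,L)^d) \cap \ZZ^d$, with enlarged blocks $\tilde B_{\mathbf{k}} := (\mathbf{k}L + [-L', L+L')^d) \cap \ZZ^d$. I declare $\mathbf{k} \in \ZZ^d$ \emph{good} if (a) every site in $\tilde B_{\mathbf{k}}$ is occupied and (b) there is no $\cO$-open path contained in $\tilde B_{\mathbf{k}}$ (i.e.\ using only edges with both endpoints in $\tilde B_{\mathbf{k}}$) connecting some site of $B_{\mathbf{k}}$ to the inner boundary of $\tilde B_{\mathbf{k}}$.

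The key observation is that (a) forces every nearest-neighbour edge with both endpoints in $\tilde B_{\mathbf{k}}$ to itself be the feasible pair containing it, so its state is Bernoulli($\lambda$) independent of the rest. Hence the restriction of $\cO$ to $\tilde B_{\mathbf{k}}$ is standard Bernoulli($\lambda$) bond percolation, and using classical exponential decay of connectivity in the subcritical phase (applicable since $\lambda < \pcb(d)$), one obtains
\[
\PP_{p,\lambda,d}(\mathbf{k}\ \text{is bad}) \;\leq\; (L+2L')^d(1-p) + L^d e^{-c L'},
\]
for some $c = c(\lambda, d) > 0$. Fix $L$ and $L'$ large so the second term is negligible and then take $p$ close to $1$ so that the first term is also small. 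By constructing the model from i.i.d.\ $\mathrm{Uniform}[0,1]$ marks indexed by potential axis-parallel segments (so that the state of each feasible pair is the indicator that its mark is at most $\lambda$), the event ``$\mathbf{k}$ is good'' is measurable with respect to $\omega$ and the marks restricted to $\tilde B_{\mathbf{k}}$; hence the bad-indicator family is finite-range dependent in the renormalized lattice.

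By Liggett--Schonmann--Stacey, the bad-block field is then stochastically dominated by an i.i.d.\ Bernoulli site percolation whose parameter $q$ tends to $0$ with $\PP_{p,\lambda,d}(\mathbf{k}\ \text{is bad})$; for $p$ close enough to $1$ one has $q < \pcs(d)$, so almost surely no infinite cluster of bad blocks exists. The conclusion uses the key consequence of (b): if $\mathbf{k}$ is good and the origin's open cluster $C_0 \subset \cO$ meets $B_{\mathbf{k}}$, then $C_0 \subseteq \tilde B_{\mathbf{k}}$, since the initial segment of any subpath of $C_0$ exiting $\tilde B_{\mathbf{k}}$ would witness a forbidden crossing. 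Consequently, if $C_0$ were infinite, the set of blocks it visits would be an infinite connected subset of bad blocks, a contradiction. I expect the main obstacle to be the careful verification of finite-range dependence---one must index the Bernoulli marks by pairs of collinear sites rather than by edges of $\EE^d$---combined with the quantitative estimate above. The continuity of $\lambda_c(\cdot, d)$ at $p = 1$ follows at once: the theorem provides $\liminf_{p \to 1} \lambda_c(p, d) \geq \pcb(d)$, while the reverse inequality stems from the observation that restricting $\cO$ to edges between pairs of occupied sites yields an independent Bernoulli structure with parameters $(p, \lambda)$ that percolates for any $\lambda > \pcb(d)$ once $p$ is sufficiently close to $1$.
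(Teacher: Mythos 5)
Your argument is correct, but it follows a genuinely different route from the paper. You run a \emph{single-scale} block renormalization: the good-block event is engineered (by demanding full occupation of the enlarged block) so that, on it, every edge inside the enlarged block is its own feasible pair, making the restriction of $\cO$ exactly Bernoulli($\lambda$) there; this both yields the quantitative bad-block estimate via subcritical exponential decay and, crucially, makes the good/bad indicators finite-range dependent, so Liggett--Schonmann--Stacey domination plus $q<\pcs(d)$ kills infinite bad clusters, and your containment argument ($C_0\subseteq\tilde B_{\mathbf{k}}$ for good visited blocks) finishes the proof. The paper instead reuses its multiscale machinery (the scales $L_k$, the cascading crossing events $A_k(x)=\{B(x,L_k)\leftrightarrow\partial B(x,10L_k)\}$, the recurrence and inductive Lemmas together with the correlation-decay Lemma), and only the trigger step uses the same observation as yours, namely that conditioning on all sites of $B(o,10L_{k_1})$ being occupied reduces the crossing probability to a subcritical Bernoulli estimate, after which $1-p^{|V|}$ is made small by taking $p$ close to $1$. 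What each buys: your scheme is self-contained at one scale and avoids the decay-of-correlations lemma entirely, at the price of invoking LSS as an external tool; the paper's scheme avoids LSS and any domination statement, and amortizes the multiscale framework it already built for Theorem~\ref{teo:lambda_critical_non_trivial}.

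One caveat on your closing sentence: the reverse inequality $\limsup_{p\to1}\lambda_c(p,d)\le\pcb(d)$ does not ``follow at once.'' Your restricted structure (edges whose two endpoints are occupied, each open with probability $\lambda$) is mixed site--bond percolation, not an independent bond model, and the assertion that it percolates for every $\lambda>\pcb(d)$ once $p$ is close enough to $1$ is a true but nontrivial stability statement for supercritical Bernoulli percolation under slight site dilution; it needs either its own renormalization/finite-size argument or a citation. The paper handles this direction by quoting \cite[Theorem~2.4~(iii)]{beaton2019alignment}, and you should do likewise (or supply the block argument) to complete the ``in particular'' part of the statement; the main claim \eqref{eq:continuity_at_1} is unaffected.
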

Theorem~\ref{teo:continuity_at_1} answers affirmatively a question posed in
\cite{beaton2019alignment} (see the paragraph preceding Remark~2.6 therein).

Let us say a word about the proofs of
Theorems~\ref{teo:lambda_critical_non_trivial} and \ref{teo:continuity_at_1}.
As mentioned above, the proof of Theorem~\ref{teo:lambda_critical_non_trivial}
for $d \geq 3$ is obtained by embedding the hexagonal lattice in $\ZZ^d$ in an
appropriate way. This comparison does not work for $d=2$ and to prove that
$\lambda_c(\cdot, 2)<1$ we develop a multiscale renormalization.
 
Renormalization is a classical technique that has been successfully applied to a
vast range of areas in mathematics and physics.
Instead of providing a complete account on the applicability of the method,
we would just like to mention that it has been successfully used to study
models in which correlations decay suitably with distance.
To mention a few examples, it has been employed to show the existence of
phase transition on dependent percolation e.g.\ random
interlacements~\cite{sznitman2010vacant} and Poisson cylinder
model~\cite{tykesson2012percolation}, to study the long-range behavior
of random walks on random environments~\cite{blondel18} and on detection
problems~\cite{baldasso2018can}.

The structure of the argument, as we use here, can be summarized in the
following steps.
\begin{itemize}
\item[\textbf{S1.}] (Recurrence inequality)
Define a sequence of scales $L_k$ increasing sufficiently fast
and consider a family of events $\{A_k(x);\; k \in \NN,\, x \in \ZZ^d\}$
where each $A_k(x)$ is measurable with respect to the restriction of the
process to a finite region having diameter of order $L_k$ around $x$.
Moreover the family is required to be \textit{cascading},
meaning that the occurrence of $A_{k+1}(x)$ implies the occurrence
of events $A_k(x')$ and $A_k(x'')$ for points $x'$ and $x''$
well-separated in scale $k$ but close enough in scale $k+1$.
Defining $q_{L_{k}}(p, \lambda, d) := \PP_{p, \lambda, d}(A_k(x))$
which does not depend on $x$ (due to translation invariance)
one can deduce that
\begin{equation}
\label{eq:general_recurrence_ineq}
q_{L_{k+1}}(p, \lambda, d)
    \le g\Bigl(\frac{L_{k+1}}{L_{k}}\Bigr) \cdot
        \bigl[
        q_{L_{k}}(p, \lambda, d)^2 + \text{error}_k(p, \lambda, d)
        \bigr],
\end{equation}
where $g$ is some positive real function and the error term is small due to the
decay of correlations and the choice of $L_k$.
In summary, inequality~\eqref{eq:general_recurrence_ineq} relates the
probabilities of the events $A_k$ at two successive scales by using
the structure of cascading events and
decay of correlations.

\item[\textbf{S2.}] (Inductive step)
Assuming that $q_{L_{k_{0}}}(p, \lambda, d) \le f(L_{k_{0}})$ for some choice
of $k_0(p, \lambda, d)$ and positive real function $f(x)$ tending to zero as
$x \to \infty$, we show inductively that
\begin{equation}
\label{eq:general_inductive_step}
q_{L_{k}}(p, \lambda, d) \le f(L_{k})
\quad \text{for every $k \geq k_0$}.
\end{equation}

\item[\textbf{S3.}] (Trigger)
Find a choice of $p, \lambda, d,  k_0$ so that
$q_{L_{k_{0}}}(p, \lambda, d) \le f(L_{k_{0}})$.
Once we fix such parameters, we can trigger the induction
in~\eqref{eq:general_inductive_step} and conclude that
$q_{L_{k}}(p, \lambda, d)$ indeed decays at least as fast as
$f(L_{k})$ as we increase $k$.
\end{itemize}

The proofs of Theorem~\ref{teo:lambda_critical_non_trivial} for $d=2$ and of
Theorem~\ref{teo:continuity_at_1}  follow essentially the structure above.  The
sequence of events $A_k(x)$ are either the existence of an open path or a dual
path crossing a box of size $L_k$ around $x$. Apart from the difference in the
definition of the events $A_k(x)$, the trigger step S3 is also performed
differently. For Theorem~\ref{teo:lambda_critical_non_trivial} we fix
$p \in (0,1)$ and take $\lambda$ sufficiently close to $1$, whereas for
Theorem~\ref{teo:continuity_at_1} we fix $\lambda < \pcb(d)$ and take $p$
sufficiently close to $1$. Our application is reasonably simple and showcases
the power of multiscale renormalization.  
This technique can be used in similar
percolation problems as long as it presents a good decay of correlations which
entails a small error term in \eqref{eq:general_recurrence_ineq}, see
Lemma~\ref{lema:decay_correlations}.

Throughout the text, we will use $c$ or $C$ to indicate positive constants
whose values may change from line to line.  Numbered constants like $c_0, c_1,
\ldots$ or $k_0, k_1,\ldots$ will have their values fixed at the first time
they appear and will remain fixed. We may indicate the dependence of constants
on other parameters, for instance, $c_i(d)$ is a constant whose value depend on
$d$ but not on any other parameter of the model.

Let us conclude this section by summarizing the structure of the paper.
On Section~\ref{sec:properties} we discuss some properties of the model:
monotonicity, uniqueness of the infinite cluster, the lattice condition, and
decay of correlations. 
On Section~\ref{sec:multi_scale_renormalization} we
give the multiscale framework that will be used for proving
Theorems~\ref{teo:lambda_critical_non_trivial} and~\ref{teo:continuity_at_1},
and then complete their proofs on
Sections~\ref{sub:phase_transition_for_alignment_percolation}
and~\ref{sub:continuity_at_p_1}, respectively.

\section{Properties of the independent model}
\label{sec:properties}

In this section we collect some useful properties fulfilled by
independent alignment. Some of them were were already mentioned
in~\cite{beaton2019alignment}.

Let us first describe some monotonicity properties. Since we are
concerned with percolation of the subgraph induced by the set of open edges,
we will compare two configurations $\xi,\, \xi' \in \Xi$ using
the natural partial order in $\Sigma$, that is,
\begin{equation*}
\xi \preccurlyeq \xi'
\quad \text{if and only if} \quad
\xi(e) \le \xi'(e),\ \forall e \in \EE^{d}.
\end{equation*}
Recall that $\PP_{p, \lambda}$ is the underlying probability distribution of
the model and denote by $\xi_{p,\lambda}$ a random element distributed as
$\mathbb{P}_{p,\lambda}$.
We may possibly omit one of the parameters in $\xi_{p,\lambda}$
when there is no risk of confusion.

For a fixed $p \in (0, 1]$ the $\xi_{p, \lambda}$ can be constructed
in a monotone way in $\lambda$ (with respect to this partial order)
using the standard coupling.
For a fixed $\lambda \in [0, 1]$ the picture is more complex. 
This was already mentioned on~\cite{beaton2019alignment}
but we elaborate on this discussion here.
\begin{prop}
\label{prop:no_monotonicity_p}
Fix $\lambda \in (0, 1]$ and denote by $\xi_p$ a random element
of $\Xi$ with distribution given by $\PP_{p, \lambda}$.
For any $0 < p_1 < p_2 \le 1$ we have that almost surely
neither $\xi_{\smash{p_1}}\preccurlyeq \xi_{\smash{p_2}}$ nor
$\xi_{\smash{p_2}} \preccurlyeq \xi_{\smash{p_1}}$.
\end{prop}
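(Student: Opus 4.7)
The plan is to work on a common probability space realizing the standard monotone coupling of $\{\xi_p\}_{p \in (0,1]}$: take i.i.d.\ uniform $[0,1]$ random variables $\{U_v\}_{v \in \ZZ^d}$ and declare $v$ occupied in $\omega_p$ iff $U_v \le p$, so that the occupied sets are nested in $p$. Independently, take i.i.d.\ Bernoulli$(\lambda)$ variables $\{Y_{v,w}\}$ indexed by the unordered collinear pairs of $\ZZ^d$, and declare a feasible pair of $\omega_p$ open iff its $Y$-variable equals $1$. This coupling is monotone in $p$ on the site variables, but the feasible pair of $\omega_p$ that covers a given edge $e \in \EE^d$ can shrink as $p$ grows, which is precisely the mechanism I will exploit to produce non-monotonicity of $\sigma$.

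To establish $\xi_{p_1} \not\preccurlyeq \xi_{p_2}$ a.s., fix any coordinate direction $e_i$ and, for $v \in \ZZ^d$, introduce the local event
\begin{equation*}
E_v = \bigl\{\, U_v \le p_1,\ U_{v+2e_i} \le p_1,\ U_{v+e_i} \in (p_1, p_2],\ Y_{v, v+2e_i} = 1,\ Y_{v, v+e_i} = 0 \,\bigr\}.
\end{equation*}
On $E_v$, in $\omega_{p_1}$ the sites $v$ and $v+2e_i$ are occupied with $v+e_i$ vacant, so $\{v, v+2e_i\}$ is a feasible pair covering the edge $\{v, v+e_i\} \in \EE^d$, and $Y_{v, v+2e_i} = 1$ makes that edge open in $\xi_{p_1}$. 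In $\omega_{p_2}$ the site $v+e_i$ is additionally occupied, so the feasible pair covering the same edge collapses to $\{v, v+e_i\}$, which by $Y_{v, v+e_i} = 0$ is closed. Hence the edge witnesses $\xi_{p_1} \not\preccurlyeq \xi_{p_2}$. Since $\PP(E_v) = p_1^2(p_2-p_1)\lambda(1-\lambda) > 0$ and the events $\{E_{4k e_i}\}_{k \in \NN}$ depend on disjoint families of $U$'s and $Y$'s, they are independent; Borel--Cantelli then yields that $E_v$ occurs for some $v$ almost surely.

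The reverse direction is entirely symmetric. With
\begin{equation*}
E'_v = \bigl\{\, U_{v-e_i} \le p_1,\ U_{v+e_i} \le p_1,\ U_v \in (p_1, p_2],\ Y_{v-e_i, v+e_i} = 0,\ Y_{v, v+e_i} = 1 \,\bigr\},
\end{equation*}
the edge $\{v, v+e_i\}$ is covered in $\omega_{p_1}$ by the closed feasible pair $\{v-e_i, v+e_i\}$ and in $\omega_{p_2}$ by the open feasible pair $\{v, v+e_i\}$, so it is closed in $\xi_{p_1}$ and open in $\xi_{p_2}$. The same independence plus Borel--Cantelli argument delivers $\xi_{p_2} \not\preccurlyeq \xi_{p_1}$ a.s.

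There is no serious technical obstacle; the only delicate point is conceptual, namely to fix a coupling under which the pointwise comparison is well-defined, and then to identify the right microscopic mechanism: inserting a new occupied site strictly between two existing ones splits an old feasible pair and replaces its openness by a freshly sampled Bernoulli, producing both directions of non-monotonicity with positive probability per trial. Strictly speaking the argument requires $\lambda \in (0,1)$, since for $\lambda = 1$ every feasible pair is automatically open and the coupling above is genuinely monotone.
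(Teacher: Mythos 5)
Your argument is internally correct for the statement it proves, but it proves a weaker statement than the proposition is meant to assert. The content of the proposition --- and what the paper establishes via Strassen's theorem --- is that the laws of $\xi_{p_1}$ and $\xi_{p_2}$ are not stochastically ordered: \emph{no} coupling makes $\xi_{p_1}\preccurlyeq\xi_{p_2}$ almost surely, and no coupling makes $\xi_{p_2}\preccurlyeq\xi_{p_1}$ almost surely. You instead fix one particular coupling (uniforms for the sites, one Bernoulli label per collinear pair) and show that \emph{this} coupling is a.s.\ non-monotone in both directions. That is genuinely weaker: the failure of a single coupling never rules out the existence of a monotone one. As a sanity check, couple two ordinary Bernoulli site percolations with $p_1<p_2$ \emph{independently}: almost surely neither configuration dominates the other, yet stochastic domination certainly holds there. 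So an argument of your type cannot, even in principle, deliver the conclusion the proposition is used for in the paper, namely that there is no monotone-in-$p$ coupling available when one tries to compare the model at different densities.

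The good news is that your microscopic mechanism --- inserting an occupied site splits a feasible pair and re-randomizes the segment covering a given edge --- is exactly the right phenomenon, and it converts directly into the paper's proof. The paper invokes Strassen's theorem, under which domination in either direction forces monotonicity in $p$ of the probability of every increasing event; it then exhibits increasing events whose probabilities move in opposite directions. The event that $o$ has an open incident edge has probability $p[1-(1-\lambda)^{2d}]+(1-p)[1-(1-\lambda)^{d}]$, strictly increasing in $p$ for $\lambda\in(0,1)$, which rules out $\xi_{p_1}\succcurlyeq\xi_{p_2}$; events of the form $\{\xi(E)\equiv 1\}$ have probability $\EE[\lambda^{t(\eta_p,E)}]$, non-increasing in $p$ and strictly decreasing for suitable $E$ (for two adjacent collinear edges it equals $p\lambda^{2}+(1-p)\lambda$), which rules out $\xi_{p_2}\succcurlyeq\xi_{p_1}$. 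Your events $E_v$ and $E'_v$ are precisely local witnesses of these two opposite monotonicities; had you recast them as increasing events and compared their probabilities under the two marginals (rather than comparing configurations under one chosen coupling), you would obtain the full statement. Your closing remark about $\lambda=1$ is correct, and in fact applies to the paper's statement as well: for $\lambda=1$ every edge is a.s.\ open for all $p>0$, so the configurations are trivially ordered and the result genuinely requires $\lambda<1$.
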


\begin{proof}
Recall that an event $A \subset \Sigma$
is said \textit{increasing} if $x \in A$ and $y \succcurlyeq x$
implies $y \in A$.
By Strassen's Theorem \cite[Theorem~2.4]{liggett2012interacting},
there exists a coupling of $(\xi_{p_1}, \xi_{p_2})$ such that
$\xi_{p_1} \succcurlyeq \xi_{p_2}$ if and only if
we have $\PP_{p_1}(A) \geq \PP_{p_2}(A)$ for any increasing event $A$.

The inequality $\mathbb{P}_{p_1} (A) \geq \mathbb{P}_{p_2}(A)$
actually holds for some increasing events.
For example, if for any finite $E \subset \EE^{d}$ we denote by $\xi(E)$
the restriction of $\xi$ to the edges of $E$ and write $\xi(E) \equiv 1$
if $\xi(e)=1$ for every edge in $E$ then we claim that
\begin{equation*}
\PP_{\smash{p_1}, \lambda} (\xi(E) \equiv 1)
    \geq \PP_{\smash{p_2}, \lambda} (\xi(E) \equiv 1)
    \quad \text{for every finite $E \subset \EE^{d}$}.
\end{equation*}
In order to prove this, we use a standard coupling for the underlying
site percolation configuration.
Let $\{U_{v};\; v \in \ZZ^{d}\}$ be a family of i.i.d.\ random variables with
the uniform distribution on $[0,1]$ and for $p \in (0, 1]$ define
$\eta_p \in \Omega$ by $\eta_p(v) := \I\{U_v \le p\}$ for every $v \in \ZZ^d$.
On $\Omega$ we have a partial ordering defined by $\eta \preccurlyeq \eta'$ if
and only if $\eta(v) \le \eta'(v)$ for every $v \in \ZZ^{d}$.
Notice that the coupling above makes $\eta_{\smash{p_1}} \preccurlyeq \eta_{\smash{p_2}}$.
Denote by $t(\eta_p, E)$ the number of feasible pairs of $\eta_p$
that contain at least one edge that belongs to $E$.
We have
\begin{equation*}
\PP_{p, \lambda}( \xi(E) \equiv 1)
    = \EE\bigl[ \PP_{p, \lambda}( \xi(E) \equiv 1 \mid \eta_p)\bigr]
    = \EE \bigl[ \lambda^{t(\eta_p, E)} \bigr].
\end{equation*}
Adding new sites to a configuration can only increase $t(\cdot, E)$. Thus, we have
\begin{equation*}
\PP_{\smash{p_1}, \lambda}( \xi(E) \equiv 1)
    = \EE \bigl[ \lambda^{t(\eta_{\smash{p_1}}, E)} \bigr]
    \geq \EE \bigl[ \lambda^{t(\eta_{\smash{p_2}}, E)} \bigr]
    = \PP_{\smash{p_2}, \lambda}( \xi(E) \equiv 1).
\end{equation*}
On the other hand, the inequality may fail for some increasing events $A$.
In fact, as already noted in~\cite{beaton2019alignment} the
increasing event 
\begin{equation*}
A \!= \!\{\text{$o$ has an open incident edge}\}
\end{equation*} 
has probability $p[1 - (1 - \lambda)^{2d}] + (1-p) [1 - (1 - \lambda)^{d}]$ which is
increasing in $p$.
\end{proof}

Conjecture~2.5 of \cite{beaton2019alignment} states that the function
$\lambda_c(\cdot, d)$ should be continuous and strictly increasing.
Proposition~\ref{prop:no_monotonicity_p} does not invalidate this
conjecture but shows that to compare the model for different values of
$p$ may not be a simple task.

Another known property of the model is that whenever $\cO$ percolates,
\ the infinite cluster is unique $\PP_{p, \lambda, d}$-a.s.
The proof of this result follows from the classical argument of
Burton and Keane~\cite{burton89},
and is part of~\cite[Theorem~2.4]{beaton2019alignment}.
This property may be useful, for instance to prove continuity of
the critical curve but we will not use it in this paper.

We now discuss other properties that were not presented
in~\cite{beaton2019alignment}.
Denote by $\mu$ the marginal of $\PP_{p, \lambda}$ on $\Sigma$.
Moreover, for $x, y \in \Sigma$ define $x \vee y, x \wedge y \in \Sigma$ as
\begin{equation*}
    (x \vee y)(e) := \max\{x(e), y(e)\}
    \quad \text{and} \quad
    (x \wedge y)(e) := \min\{x(e), y(e)\}
    \quad \text{for every $e \in \EE^{d}$}.
\end{equation*}
It is worth noticing that $\mu$ fails to satisfy the so-called 
\textit{lattice condition}
\begin{equation}
\label{eq:lattice_condition}
    \mu(x \vee y) \mu(x \wedge y) \geq \mu(x) \mu(y)
    \quad \text{for every $x, y \in \Sigma$.}
\end{equation}
Indeed, consider $E = \{\pm e_1, \pm e_2\} \subset \EE^{d}$ and
the configurations on $\{0, 1\}^{E}$ given by
\begin{equation*}
    x(e) = \I\{\pm e_1\}(e)
    \quad \text{and} \quad
    y(e) = \I\{e_1, e_2\}(e).
\end{equation*}
It is straightforward to check that
\begin{align*}
&\mu(x \vee y) \mu(x \wedge y) - \mu(x) \mu(y) \\
    &\hspace{5mm} = \bigl[p \lambda^3(1-\lambda)\bigr] \cdot \bigl[p \lambda
    (1-\lambda)^3\bigr]
    - \bigl[p \lambda^2(1 - \lambda)^2 + (1-p)\lambda (1 - \lambda)\bigr] \cdot
    \bigl[p \lambda^2 (1 - \lambda)^2\bigr] < 0.
\end{align*}
Proving~\eqref{eq:lattice_condition} is the usual strategy to show that
the measure $\mu$ satisfies the FKG inequality, also known as positive
association.
However, the FKG inequality is not equivalent to the lattice condition;
for background on positive association and its relation to the lattice
condition we refer to~\cite[Chapter~2]{grimmett2006random}.
This raises the question whether the model satisfies the FKG inequality.

Although independent alignment percolation model does not exhibit great
monotonicity properties, it does present fast correlation decay with distance. 
This fact will allow us to implement the multiscale renormalization approach
outlined in Section~\ref{sec:definition_of_the_model}.

For $x \in \ZZ^{d}$ and $L > 0$ let us define 
\begin{equation*}
B(x, L)
    := \bigl\{z \in \ZZ^{d};\; \norm{z-x}_{\infty} \le L\bigr\}
\quad \text{and} \quad
\partial B(x, L)
    := \bigl\{z \in \ZZ^{d};\; \norm{z-x}_{\infty} = \lfloor L \rfloor\bigr\}.
\end{equation*}
For $B \subset \ZZ^{d}$, we say that an event $A$
\textit{is supported on edges of}
$B$ if $A$ belongs to the smallest $\sigma$-algebra that makes
$\bigl\{\xi(e);\; e = \{u, v\} \in \EE^{d},\, u,v \in B\bigr\}$ measurable.

\begin{lema}[Decay of correlations]
\label{lema:decay_correlations}
Let $L > 0$ and $p \in (0,1)$.
For $x_1, x_2 \in \ZZ^{d}$ with $D = \norm{x_1 - x_2}_{\infty} > 2L$,
let $A_i$,  $i = 1,2$, be events supported on edges of $B_i := B(x_i, L)$,
respectively. Then,
\begin{equation}
\label{eq:decay_correlations}
\bigl|\Cov_{p, \lambda}(A_1, A_2)\bigr|
    \le 4 \cdot (2L+1)^{d-1} \cdot
    e^{-\alpha(p) \cdot (D- 2L )}
\end{equation}
with $\alpha(p) := - \log (1-p) > 0$.
\end{lema}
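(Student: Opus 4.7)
The plan exploits the two-level structure of the model. Conditional on the site configuration $\omega$, the edge states are determined by a family of independent $\mathrm{Bernoulli}(\lambda)$ random variables indexed by the feasible pairs of $\omega$. Writing $\cF_i=\cF_i(\omega)$ for the set of feasible pairs of $\omega$ that contain at least one edge of $B_i$, the key observation is that whenever $\cF_1\cap\cF_2=\emptyset$, the events $A_1$ and $A_2$ depend on disjoint sub-families of the underlying Bernoullis and are therefore conditionally independent given $\omega$. I call a feasible pair in $\cF_1\cap\cF_2$ a \emph{bridge}, and let $E\in\sigma(\omega)$ denote the event that a bridge exists. The proof then decomposes into two tasks: bounding $\PP(E)$, and relating $|\Cov(A_1,A_2)|$ to $\PP(E)$.

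\emph{Bound on $\PP(E)$.} A bridging feasible pair is a coordinate-axis segment on a line passing through both $B_1$ and $B_2$. For a line in direction $j$ to pass through both boxes one needs $|x_{1,k}-x_{2,k}|\le 2L$ for every $k\ne j$; combined with $D=\norm{x_1-x_2}_\infty>2L$ this forces $j$ to be the (unique, if any) coordinate where $|x_{1,j}-x_{2,j}|=D$, and restricts the remaining coordinates to a common range of at most $2L+1$ values each. Hence there are at most $(2L+1)^{d-1}$ such lines, and on each of them a bridge requires a stretch of length at least $D-2L$ containing no occupied site, an event of probability $(1-p)^{D-2L}=e^{-\alpha(p)(D-2L)}$. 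A union bound yields
\begin{equation*}
    \PP(E)\le (2L+1)^{d-1}\,e^{-\alpha(p)(D-2L)}.
\end{equation*}

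\emph{Covariance bound.} The subtle point is that $\PP(A_i\mid\omega)$ depends on $\omega$ outside $B_i$ through the random identification of $\cF_i$, so the equality $\PP(A_1\cap A_2\mid\omega)=\PP(A_1\mid\omega)\,\PP(A_2\mid\omega)$ on $E^c$ does not yield an unconditional decoupling by itself. To bypass this, I would reveal $\omega$ on each bridging line sequentially from each endpoint, stopping at the first occupied site encountered from each side; let $\cG$ denote the sigma-algebra generated by those stopping positions together with the vacancy information they imply. On $E^c\in\cG$ both stopping sites lie in the buffer strictly between the boxes, and the randomness remaining after $\cG$ splits cleanly: a ``$B_1$-side'' piece (including the configuration on $B_1$, on the $V_1$-side of each bridging line, and on non-bridging lines through $B_1$) determines $A_1$, while a symmetric ``$B_2$-side'' piece determines $A_2$, and these pieces live on disjoint lattice regions. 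Consequently $\Cov(A_1,A_2\mid\cG)=0$ on $E^c$. The law of total covariance, combined with the obvious bound $|\Cov(A_1,A_2\mid\cG)|\le 1$ on $E$ and with the fact that the remaining term --- $\Cov(\EE[1_{A_1}\mid\cG],\EE[1_{A_2}\mid\cG])$ --- is again driven by the $\cG$-measurable event $E$, then produces a bound of the form $|\Cov(A_1,A_2)|\le c\,\PP(E)$ for an absolute constant $c$, giving the lemma (the constant $4$ absorbs the small losses).

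\emph{Main obstacle.} The delicate part is verifying the residual-independence picture on $E^c$: the two stopping positions on a common bridging line are correlated as random variables, so the separation into $B_1$-side and $B_2$-side must be argued at the level of the \emph{residual} randomness left after revealing $\cG$ (where it is genuinely disjoint) rather than of $\cG$ itself, and the covariance of the conditional expectations $\EE[1_{A_i}\mid\cG]$ must be shown to be of the same order $\PP(E)$. The remaining ingredients --- the counting of bridging lines, the union bound over empty stretches, and the law of total covariance --- are routine.
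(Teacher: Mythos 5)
Your bound on $\PP(E)$ is exactly the paper's estimate: the paper introduces the event $C$ that some axis-parallel segment $I_z=[a_z,b_z]$ joining the two boxes (with $a_z$ on the face of $B_1$ and $b_z$ on the face of $B_2$, one such segment per line through the overlap of the projections) is entirely vacant, counts at most $(2L+1)^{d-1}$ such lines, and applies the same union bound. Your overall architecture is also the paper's: decouple off the bridging event, whose probability carries the exponential factor. The paper, however, does not use a stopping-site sigma-algebra; it shows that on $C^{c}$ no feasible pair contains an edge of both boxes, asserts the factorization $\PP(A_1\cap A_2\mid C^{c})=\PP(A_1\mid C^{c})\,\PP(A_2\mid C^{c})$, and gets $|\Cov(A_1,A_2)|\le 4\PP(C)-2\PP(C)^2$ by elementary manipulations.

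The gap in your write-up is the step you yourself flag, and it is the heart of the lemma rather than a routine verification. Concretely: (i) the claim that on $E^{c}$ both stopping sites lie strictly between the boxes is false --- exploring a bridging line you may first hit the face site $a_z\in B_1$ or $b_z\in B_2$ (for instance when the whole buffer is vacant and only the opposite face site is occupied), so $\cG$ reveals sites inside the boxes and the advertised split of the residual randomness into a ``$B_1$-side'' and a ``$B_2$-side'' piece does not hold as stated; (ii) the term $\Cov\bigl(\EE[\I{A_1}\mid\cG],\EE[\I{A_2}\mid\cG]\bigr)$ is only asserted to be of order $\PP(E)$, with no argument, and this is exactly where the difficulty you correctly identified (that $\PP(A_i\mid\omega)$ depends on $\omega$ outside $B_i$) reappears. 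The missing ingredient that settles both points is the observation that, given $\omega$, the conditional law of the edge states inside $B_i$ does not depend on where the far endpoints of the relevant feasible pairs lie, only on their existence (which holds almost surely): moving a pair's far endpoint does not change which edges of $B_i$ it covers, and its coin is a fresh Bernoulli$(\lambda)$ in either case. With this in hand, if you reveal only the buffer strictly between the boxes, then off the bridging event the conditional probabilities $\EE[\I{A_i}\mid\cG]$ are a.s.\ constant, so your second total-covariance term is supported on an event of the same exponential order and the first term is at most $\PP(E)$, yielding the lemma (up to the value of the constant); without it, the plan does not produce a bound of the form $c\,\PP(E)$. Once this observation is supplied, your argument essentially collapses to the paper's conditioning on $C^{c}$, so the route is not genuinely different --- it is the paper's proof with its one nontrivial step left open.
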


\begin{proof}
The assumption on $D$ implies $\dist(B_1, B_2) = D - 2 \lfloor L \rfloor \geq 1$
and, in particular, $B_1 \cap B_2 = \varnothing$. Choose the
smallest $1 \le t \le d$ such that $D = |x_{1,t} - x_{2, t}|$ and
notice that we can separate $B_1$ from $B_2$ by a hyperplane orthogonal to $e_t$
the $t$-th vector of the canonical basis of $\RR^{d}$. Denote by
$\pi_t(B)$ the projection of a set $B \subset \RR^{d}$ into the subspace
$e_t^{\perp}$ (perpendicular to $e_t$) and define
\begin{equation*}
\Pi_t = \pi_t(B_1) \cap \pi_t(B_2).
\end{equation*}
If we have $\Pi_{t} = \varnothing$ then $A_1$ and $A_2$ are independent and
their covariance is zero. Otherwise, for every $z \in \Pi_t$ define
$I_z = [a_z, b_z] \cap \ZZ^{d}$ as the unique line
segment supported on the line $\pi_t^{-1}(z)$ with $a_z \in B_1$,
$b_z \in B_2$ and such that $(I_z\setminus\{a_z,b_z\}) \cap (B_1\cup B_2) = \varnothing$.
Define the event
\begin{equation}
\label{eq:decoupling_event}
C = \{ \exists\, z \in \Pi_t \text{ such that }\omega(v) = 0,\, \forall v \in I_z\}
\end{equation}
and notice that on $C^{c}$ the state of edges on $B_1$ and $B_2$
are independent. Indeed, for $\omega \in C^{c}$ notice that if $F_i(\omega)$
denotes the feasible edges intersecting $B_i$ and
$f_i = \{u_i, v_i\} \in F_i(\omega)$ then
$|f_1 \cap f_2| \le 1$. Thus, omitting the dependency on $p$ and $\lambda$
we have that
\begin{equation*}
\PP(A_1 \cap A_2 \cap C^{c})
    = \PP(A_1 \cap A_2 \mid C^{c}) \PP(C^{c})
    = \PP(A_1 \mid C^{c}) \PP(A_2 \mid C^{c}) \PP(C^{c}).
\end{equation*}
Using twice that for any event $E$ one has
$\bigl|\PP(E) - \PP(E \cap C^{c})\bigr| \le \PP(C)$,
we can bound
\begin{align*}
\bigl| \PP(A_1) \PP(A_2) &- \PP(A_1 \cap C^{c}) \PP(A_2 \cap C^{c}) \bigr| \\
    &= \bigl|
    \PP(A_1) [\PP(A_2) - \PP(A_2 \cap C^{c})] + 
    \PP(A_2 \cap C^{c}) [\PP(A_1) - \PP(A_1 \cap C^{c})]
    \bigr| \\
    &\le (1 + \PP(C^{c})) \cdot \PP(C),
\end{align*}
which leads to the estimate
\begin{align*}
\bigl|\Cov(A_1, A_2)\bigr|
    &= \bigl| \PP(A_1 \cap A_2) - \PP(A_1)\PP(A_2) \bigr| \\
    &\le \PP(C) + (1 + \PP(C^{c})) \PP(C) + 
        \bigl|
        \PP(A_1 \cap A_2 \cap C^{c}) - \PP(A_1 \cap C^{c}) \PP(A_2 \cap C^{c})
        \bigr| \\
    &= (2 + \PP(C^{c})) \PP(C) + 
        \bigl|
        \PP(A_1 \mid C^{c}) \PP(A_2 \mid C^{c}) \cdot
        [\PP(C^c) - \PP(C^{c})^{2}]
        \bigr| \\
    &\le 2 \PP(C) + \PP(C^{c}) \PP(C) + 
        \PP(C^c) - \PP(C^{c})^{2} \\
    &= 4 \PP(C) - 2 \PP(C)^2.
\end{align*}
Finally, we use the union bound and the fact that $|\Pi_t| \le (2L+1)^{d-1}$ to
estimate
\begin{equation*}
    \PP(C) \le (2L+1)^{d-1} \cdot (1-p)^{D - 2L}. \qedhere
\end{equation*}
\end{proof}

\begin{remark}
Lemma~\ref{lema:decay_correlations} is stated for $\PP_{p, \lambda}$. However,
it can be adapted for the `one-choice model' and more generally
for any measures $P^{\omega}$ on the feasible pairs $F(\eta)$
satisfying that random variables
$\{\xi(f_i);\; i \in I\}$ are mutually independent for
any endpoint-disjoint family $\{f_i;\; i \in I\} \subset F(\eta)$.
To see why this is true, just consider instead of the event $C$ appearing
in \eqref{eq:decoupling_event} the event
\begin{equation*}
\tilde{C}
    = \{\exists\, f_1 \in F_1(\omega), f_2 \in F_2(\omega) \text{ such that }
        |f_1 \cap f_2| \geq 1\}.
\end{equation*}
To prevent $\tilde{C}$ from occurring we just have to ensure each line $I_z$ with
$z \in \pi_t(B_1) \cup \pi_t(B_2) =: \tilde{\Pi}_t$ contains
$2$ sites in $\eta(\omega)$ lying in the region between the two boxes.
The decay of correlations will follow once one notes that
$|\tilde{\Pi}_t| \le 2(2L + 1)^{d-1}$ and
changing $\alpha(p)$ accordingly. Moreover, we notice that the bound on
\eqref{eq:decay_correlations} is independent of $\lambda$.
\end{remark}

\section{Multiscale renormalization}
\label{sec:multi_scale_renormalization}

In this section we build a multiscale renormalization scheme that allow us
to prove Theorem~\ref{teo:lambda_critical_non_trivial} in the case $d=2$
and Theorem~\ref{teo:continuity_at_1}.
A key step will be the use of the correlation decay in
Lemma~\ref{lema:decay_correlations}.

We start by defining the sequence of scales $L_0, L_1, \ldots$
along which we analise the model. Let
\begin{equation}
\label{eq:sequence_scales}
\text{$L_0 := 10^4$ \,\,\,\,\, and \,\,\,\,\, $L_{k+1} = L_k^{3/2}$,}
\end{equation}
that is, $L_k = L_0^{(3/2)^k}$ is a sequence growing super-exponentially fast.
This same sequence is used in the proofs of both
Theorem~\ref{teo:lambda_critical_non_trivial} in the case $d=2$ and
Theorem~\ref{teo:continuity_at_1}.
The family of cascading events
$\{A_k(x)\}_{x \in \mathbb{Z}^d, k\in \mathbb{Z}_+}$
needs to be defined properly for each proof, but $A_k(x)$
will always be supported in the annular region
$B(x, 10L_{k}) \setminus B(x, L_{k})$. 
However once these events are defined, Steps {\bf S1} and {\bf S2}
in Section~\ref{sec:definition_of_the_model} can be carried on
exactly the same way in both proofs.

For each $k$, let $\cL_k^{1} \subset \partial B(o, L_{k+1})$
and $\cL_k^{2}\subset \partial B(o, 5 L_{k+1})$ be minimal collections
of points such that
\begin{equation*}
\partial B(o, L_{k+1}) \subset \bigcup_{x \in \cL_n^{1}} B(x, L_k)
\quad \text{and} \quad
\partial B(o, 5 L_{k+1}) \subset \bigcup_{x \in \cL_n^{2}} B(x, L_k).
\end{equation*}
The following lemma gives bounds on the size of $\cL_{k}^{i}$.
\begin{lema}
\label{lema:coverings}
There are constants $c = c(d)$ and $C = C(d)$ such that for $i = 1, 2$ we have
\begin{equation}
\label{eq:coverings}
c \cdot \Bigl(\frac{L_{k+1}}{L_{k}}\Bigr)^{d-1}
    \le |\cL^{i}_{k}|
    \le C \cdot \Bigl(\frac{L_{k+1}}{L_{k}}\Bigr)^{d-1}.
\end{equation}
\end{lema}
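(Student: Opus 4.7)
The plan is to derive both bounds from the elementary fact that in the $\ell^\infty$ metric the sphere $\partial B(o, R)$ contains of order $R^{d-1}$ lattice points, while any ball $B(x, L_k)$ intersects such a sphere in at most of order $L_k^{d-1}$ points. The argument is identical for the two spheres $\partial B(o, L_{k+1})$ and $\partial B(o, 5L_{k+1})$, so I will write it for $\partial B(o, R)$ with $R = \alpha L_{k+1}$ and $\alpha \in \{1, 5\}$; the constants in \eqref{eq:coverings} only change by a factor of $5^{d-1}$ between the two cases.

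For the upper bound I would exhibit an explicit cover of $\partial B(o, R)$ of size $O((L_{k+1}/L_k)^{d-1})$. The sphere decomposes into $2d$ faces, each of which is a translated $(d-1)$-dimensional lattice cube of side $2\lfloor R\rfloor+1$. On each face I place candidate centers on a $(d-1)$-dimensional sublattice of spacing $\lfloor L_k\rfloor$, which uses $O((R/L_k)^{d-1}) = O((L_{k+1}/L_k)^{d-1})$ points per face; the balls $B(x, L_k)$ around these centers cover the face since any lattice point on the face is within $\ell^\infty$-distance $L_k$ of some grid point. Summing over the $2d$ faces produces an explicit cover of the correct order, and the upper bound on $|\cL_k^i|$ follows from its minimality.

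For the lower bound I would count in two ways. A single face alone contains $(2\lfloor R\rfloor+1)^{d-1}$ lattice points, so $|\partial B(o, R)| \geq c_1 L_{k+1}^{d-1}$. Conversely, since $\partial B(o, R) \subset \bigcup_{j=1}^{d} \{y \in \ZZ^d : y_j = \pm \lfloor R\rfloor\}$, the intersection $B(x, L_k) \cap \partial B(o, R)$ is contained in the union of at most $2d$ slices of $B(x, L_k)$ by coordinate hyperplanes, each having at most $(2L_k+1)^{d-1}$ lattice points. Hence any single ball covers at most $C_1 L_k^{d-1}$ points of the sphere, and since the balls indexed by $\cL_k^i$ cover the sphere I get $|\cL_k^i| \cdot C_1 L_k^{d-1} \geq c_1 L_{k+1}^{d-1}$, which rearranges to the desired lower bound.

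The argument is entirely geometric and there is no genuine obstacle. The only point requiring a moment's care is the upper bound on $|B(x, L_k) \cap \partial B(o, R)|$, which rests on writing the $\ell^\infty$-sphere as a union of $2d$ flat faces so that its intersection with any small box lies on a bounded number of coordinate hyperplanes; all other estimates are direct counts of lattice points in boxes.
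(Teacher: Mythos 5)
Your argument is correct, and the lower bound is essentially the paper's: you bound $|\partial B(o,R)|$ from below by one face, bound $|B(x,L_k)\cap\partial B(o,R)|$ from above by intersecting with the (at most $2d$) coordinate hyperplanes containing the sphere, and divide; the paper does exactly this, only noting that for centers $x$ on the sphere one can take $d$ rather than $2d$ hyperplanes, which changes nothing but the constant. Where you differ is the upper bound: the paper runs a greedy packing argument, building a maximal set $\cN\subset\partial B(o,L_{k+1})$ whose balls $B(x,\tfrac12 L_k)$ are pairwise disjoint, bounding $|\cN|$ by comparing the total volume of these disjoint half-radius balls with $|\partial B(o,L_{k+1})|$, and then observing that maximality forces the balls $B(x,L_k)$, $x\in\cN$, to cover the sphere, so $\cL_k^1$ can be taken inside $\cN$. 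You instead exhibit an explicit cover by laying a $(d-1)$-dimensional grid of spacing $\lfloor L_k\rfloor$ on each of the $2d$ faces and invoking minimality of $\cL_k^i$. Both give the right order $(L_{k+1}/L_k)^{d-1}$; your construction is more concrete and exploits the flat product structure of the $\ell^\infty$-sphere (so there is nothing to verify beyond a direct lattice count), while the paper's ``covering number at most packing number'' step is more generic and would survive in settings where the boundary has no convenient face decomposition. The only points needing the care you already flag are that your grid points must lie on $\partial B(o,R)$ itself (they do, being placed on the faces) and that the grid must extend to the edges of each face so every face point is within $\ell^\infty$-distance $L_k$ of a grid point.
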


\begin{proof}
We prove the result for $\cL^{1}_{k}$ and the same reasoning applies for
$\cL_{k}^{2}$. Notice that any $B(x, L_k)$ with
$x \in \partial B(o, L_{k+1})$ satisfies
\begin{equation}
\label{eq:coverings_one_ball}
\lfloor L_{k} \rfloor^{d-1}
    \le |B(x, L_{k}) \cap \partial B(o, L_{k+1})|
    \le d \cdot (2 \lfloor L_{k} \rfloor + 1)^{d-1}.
\end{equation}
Since $\{B(x, L_{k});\; x \in \cL_{k}^{1}\}$ covers
$\partial B(o, L_{k+1})$, the upper bound on~\eqref{eq:coverings_one_ball}
implies that
\begin{equation*}
2d \cdot (2 \lfloor L_{k+1} \rfloor + 1)^{d-1}
    = |\partial B(o, L_{k+1})|
    \le |\cL_{k}^{1}| \cdot d \cdot (2 \lfloor L_{k} \rfloor + 1)^{d-1}.
\end{equation*}
and thus the lower bound on~\eqref{eq:coverings} follows. For the upper bound
consider $\cN_{0} = \varnothing$ and, inductively, while
$\{B(x, \frac{1}{2} L_k);\; x \in \cN_{j}\}$ is a disjoint collection and there
is some
\begin{equation*}
y \in \partial B(o, L_{k+1})
    \setminus \bigcup_{x \in \smash{\cN_{j}}} B(x, \tfrac{1}{2} L_k),
\end{equation*}
define $\cN_{j+1} := \cN_{j} \cup \{y\}$. This process ends in finitely many
steps, producing a set $\cN$ satisfying
\begin{equation*}
2d \cdot (2 \lfloor L_{k+1} \rfloor + 1)^{d-1}
    = |\partial B(o, L_{k+1})|
    \geq |\cN| \cdot \lfloor \tfrac{1}{2} L_{k} \rfloor^{d-1},
\end{equation*}
using~\eqref{eq:coverings_one_ball}.
Finally, just notice that we can take $\cL_{k}^{1} \subset \cN$,
since for any $y \in \partial B(o, L_{k+1})$ we have
\begin{equation*}
    \dist(y, B(x, \tfrac{1}{2} L_{k})) < \tfrac{1}{2} L_k
    \quad \text{for some $x \in \cN$} \quad \implies \quad 
    \norm{x - y}_{\infty} \le L_{k}. \qedhere
\end{equation*}
\end{proof}
\noindent
We require two properties for our events $A_k(x)$.
\begin{itemize}
\item[\textbf{P1.}]
    $\PP_{p, \lambda, d} (A_k(x))$ does not dependent upon
    the choice of $x$ which allows us to define
    \begin{equation*}
        q_k(p, \lambda, d) := \PP_{p, \lambda, d} (A_k(o)).
    \end{equation*}

\item[\textbf{P2.}] The occurrence of event $A_{k+1}(o)$ implies
    that there are $x_i \in \cL_{k}^{i}$ such that $A_{k}(x_i)$
    also occur, for $i=1,2$. Heuristically, we say that events
    $A_{k}(x)$ are cascading since if an event occurs on scale
    $k+1$ it must have occurred on two well-separated regions for
    the previous scale.
\end{itemize}
Properties {\bf P1} and {\bf P2} imply the validity of Steps
{\bf S1} and {\bf S2}.
\begin{lema}[Recurrence Inequality]
\label{lema:recurrence_inequality}
Let $\{A_k(x);\; x \in \ZZ^{d}, k \in \NN\}$ be a collection of events on
$\Xi$ satisfying Properties P1 and P2. 
Then, 
\begin{equation}
\label{eq:recurrence_inequality_both}
q_{k+1}(p, \lambda, d)
    \le c_{0}(d) \cdot L_{k}^{d-1}  q_k(p, \lambda, d)^2 +
        c_{1}(d) \cdot L_k^{2d-2}
        e^{- 3\alpha(p) \cdot \smash{L_{k}^{3/2}}},
\end{equation}
where $c_0$ and $c_1$ are positive constants depending only on $d$.
\end{lema}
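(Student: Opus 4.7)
The plan is to prove \eqref{eq:recurrence_inequality_both} by combining the cascading property \textbf{P2} with the covariance bound of Lemma~\ref{lema:decay_correlations}, together with the covering estimates of Lemma~\ref{lema:coverings}.

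First I would apply property \textbf{P2} and a union bound to write
\begin{equation*}
q_{k+1}(p,\lambda,d)
    = \PP_{p,\lambda,d}(A_{k+1}(o))
    \le \sum_{x_1 \in \cL_k^{1}} \sum_{x_2 \in \cL_k^{2}}
         \PP_{p,\lambda,d}\bigl(A_k(x_1) \cap A_k(x_2)\bigr).
\end{equation*}
Next, for each pair $(x_1,x_2)$, since $A_k(x_i)$ is supported on edges of $B(x_i,10L_k)$ and
\begin{equation*}
\norm{x_1 - x_2}_{\infty} \geq 5 L_{k+1} - L_{k+1} = 4 L_{k+1},
\end{equation*}
Lemma~\ref{lema:decay_correlations} (applied with $L = 10 L_k$ and $D \geq 4 L_{k+1}$) yields
\begin{equation*}
\PP_{p,\lambda,d}\bigl(A_k(x_1) \cap A_k(x_2)\bigr)
    \le q_k(p,\lambda,d)^2 + 4 (20 L_k + 1)^{d-1}
        e^{-\alpha(p)\, (4 L_{k+1} - 20 L_k)}.
\end{equation*}
Because $L_{k+1} = L_k^{3/2}$ and $L_0 = 10^{4}$ is large, one easily checks that $4L_{k+1} - 20L_k \geq 3L_{k+1} = 3 L_k^{3/2}$ for every $k \geq 0$.

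Finally, I would combine this with the upper bound of Lemma~\ref{lema:coverings}, which gives
\begin{equation*}
|\cL_k^{1}| \cdot |\cL_k^{2}|
    \le C(d)^2 \Bigl(\frac{L_{k+1}}{L_k}\Bigr)^{2(d-1)}
    = C(d)^2 \cdot L_k^{d-1},
\end{equation*}
since $L_{k+1}/L_k = L_k^{1/2}$. Plugging everything into the double sum produces the square term $c_0(d) \cdot L_k^{d-1}\, q_k^2$ and an error term of order $L_k^{d-1} \cdot (20L_k+1)^{d-1} e^{-3\alpha(p) L_k^{3/2}}$, which is absorbed into $c_1(d)\cdot L_k^{2d-2} e^{-3\alpha(p) L_k^{3/2}}$. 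This gives \eqref{eq:recurrence_inequality_both}.

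The proof is essentially a bookkeeping exercise and I do not anticipate a serious obstacle; the only subtle point is checking that the separation between $B(x_1, 10 L_k)$ and $B(x_2, 10 L_k)$ really grows like $L_{k+1}$ (so that after subtracting the $2L = 20 L_k$ loss in Lemma~\ref{lema:decay_correlations} we still have a term of order $L_k^{3/2}$ in the exponent), which is ensured by the choice $L_0 = 10^{4}$ and the super-exponential growth of the scales.
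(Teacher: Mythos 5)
Your proposal is correct and follows essentially the same route as the paper: a union bound over the pairs $(x_1,x_2)\in\cL_k^1\times\cL_k^2$ furnished by \textbf{P2}, the covariance bound of Lemma~\ref{lema:decay_correlations} applied with $L=10L_k$ and $D$ of order $4L_{k+1}$ (together with \textbf{P1} to identify $\PP(A_k(x_i))$ with $q_k$), and the covering estimate of Lemma~\ref{lema:coverings} giving $|\cL_k^1||\cL_k^2|\le C(d)^2 L_k^{d-1}$. The only difference is a harmless rounding detail (the paper keeps floors, so the separation is $4\lfloor L_{k+1}\rfloor$ rather than exactly $4L_{k+1}$), which does not affect the inequality $4L_{k+1}-20L_k\ge 3L_k^{3/2}$ that you correctly verify using $L_0=10^4$.
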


\begin{proof}
The boxes $B(x_i, 10 L_{k})$ given by property {\bf P2} 
are well-separated since
\begin{equation*}
\norm{x_1 - x_2}_{\infty}
    \geq \lfloor 5 L_{k+1} \rfloor - \lfloor L_{k+1} \rfloor
    \geq 4 \lfloor L_{k+1} \rfloor
    \quad
    \text{for every $x_i \in \cL_k^{i}$}.
\end{equation*}
Using the fact that $A_k(x)$ is supported on edges inside $B(x, 10 L_k)$,
we have by Lemma~\ref{lema:decay_correlations} that
there exists a constant $c_2 = c_2(d)>0$ such that
\begin{equation*}
|\Cov_{p, \lambda} (A_{k}(x_1), A_{k}(x_2))|
    \le c_2 \cdot L_k^{d-1} \cdot
        e^{- \alpha(p) (4 \lfloor L_{k+1} \rfloor - 20 L_k)}
    \le c_2 \cdot L_k^{d-1} \cdot e^{- 3 \alpha(p) L_{k+1}}
\end{equation*}
for every $k$.
Lemma~\ref{lema:coverings} implies that
$|\cL_k^{i}|
\le c_3 \cdot \frac{L_{k+1}}{L_k}
= c_3 \cdot L_k^{(d-1)/2}$ for some constant $c_3(d) >0$. Thus, we can write
\begin{align}
q_{k+1}(p, \lambda, d)
    &\le \sum_{\smash{x_1} \in \smash{\cL_k^{1}},\, \smash{x_2} \in \smash{\cL_k^{2}}}
        \PP_{p, \lambda, d} (A_{k}(x_1) \cap A_{k}(x_2)) \nonumber \\
    &\le |\cL_{k}^{1}| \cdot |\cL_{k}^{2}| \cdot 
        \bigl[q_{k}(p, \lambda, d)^2 + c_{2}(d) \cdot L_{k}^{d-1}
        e^{- 3 \alpha(p) \smash{L_{k}^{3/2}}}\bigr] \nonumber \\
    &\le c_{3}(d)^2 \cdot L_{k}^{d-1}  q_k(p, \lambda, d)^2 +
       c_{3}(d)^2\cdot c_{2}(d) \cdot L_k^{2d-2} e^{- 3\alpha(p) \cdot \smash{L_{k}^{3/2}}}.
\end{align}
The proof is finished by defining $c_{0} = c_{3}^{2}$ and $c_{1}=c_{3}^2\cdot c_{2}$.
\end{proof}

\begin{lema}[Inductive step]
\label{lema:step_s2}
Let $\{A_k(x);\; x \in \ZZ^{d}, k \in \NN\}$ be a collection of events on
$\Xi$ satisfying Properties P1 and P2. 
There is $k_0 = k_0(p, d)$ such that if for some $k_1 \geq k_0$ one has
\begin{equation}
    \label{eq:inductive_step_k1}
    q_{k_1}(p, \lambda, d) \le L_{k_1}^{-2d}
\end{equation}
then
\begin{equation}
    \label{eq:inductive_step}
    q_{k}(p, \lambda, d) \le L_{k}^{-2d} \,\,\,\,\, \text{holds for every $k \geq k_1$}.
\end{equation}
\end{lema}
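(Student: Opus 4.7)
The plan is to prove the lemma by straightforward induction on $k$, starting from the base case $k = k_1$ (which is exactly the hypothesis~\eqref{eq:inductive_step_k1}) and feeding the inductive hypothesis into the recurrence inequality~\eqref{eq:recurrence_inequality_both} provided by Lemma~\ref{lema:recurrence_inequality}. The choice of $k_0(p,d)$ is made precisely so that the recurrence inequality is strong enough to propagate the bound $L_k^{-2d}$ from one scale to the next.

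Concretely, suppose $q_k(p,\lambda,d) \le L_k^{-2d}$ at some scale $k \ge k_1$. Using $L_{k+1} = L_k^{3/2}$ and plugging into~\eqref{eq:recurrence_inequality_both}, the target bound $q_{k+1} \le L_{k+1}^{-2d} = L_k^{-3d}$ reduces to verifying the two inequalities
\begin{equation*}
c_0(d) \cdot L_k^{d-1} \cdot L_k^{-4d} \le \tfrac{1}{2}\, L_k^{-3d}
\quad \text{and} \quad
c_1(d) \cdot L_k^{2d-2} \cdot e^{-3\alpha(p) L_k^{3/2}} \le \tfrac{1}{2}\, L_k^{-3d}.
\end{equation*}
The first reduces to $L_k \ge 2 c_0(d)$, which is a condition depending only on $d$ and satisfied for all sufficiently large $k$. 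The second, after taking logarithms, reduces to
\begin{equation*}
\log(2c_1(d)) + (5d-2) \log L_k \le 3 \alpha(p) L_k^{3/2},
\end{equation*}
which is satisfied for all $k$ beyond some threshold $k_0(p,d)$ since the right-hand side grows super-polynomially in $L_k$ while the left-hand side grows only logarithmically, and $\alpha(p) = -\log(1-p) > 0$ is strictly positive for $p \in (0,1)$.

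I would therefore define $k_0(p,d)$ to be the smallest integer such that both displayed inequalities above hold at $k = k_0$; since $L_k$ is increasing in $k$ (in fact super-exponentially so by~\eqref{eq:sequence_scales}), both continue to hold for all $k \ge k_0$. Then, assuming~\eqref{eq:inductive_step_k1} at some $k_1 \ge k_0$, the induction step gives $q_{k+1} \le L_{k+1}^{-2d}$ from $q_k \le L_k^{-2d}$ for every $k \ge k_1$, yielding~\eqref{eq:inductive_step}.

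I do not expect any real obstacle here: the lemma is essentially the statement that the recurrence inequality is contractive once the error term is negligible. The only mild care needed is to verify that $k_0$ can indeed be chosen independently of $\lambda$ (it can, since $\lambda$ does not appear in the constants $c_0, c_1$ nor in $\alpha(p)$) and to record the dependence $k_0 = k_0(p,d)$ for use in the trigger step~\textbf{S3} of the two applications that follow.
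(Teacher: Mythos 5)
Your proposal is correct and follows essentially the same route as the paper: induction on $k$ via the recurrence inequality of Lemma~\ref{lema:recurrence_inequality}, with $k_0(p,d)$ chosen so that $c_0(d)L_k^{-1}\le \tfrac12$ and $c_1(d)L_k^{5d-2}e^{-3\alpha(p)L_k^{3/2}}\le \tfrac12$, which are exactly your two displayed conditions after rearrangement. The observation that $k_0$ is independent of $\lambda$ also matches the paper's statement, so nothing is missing.
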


\begin{proof}
Choose $k_0(d, p)$ as the smallest integer $k$ such that
\begin{equation*}
c_{0}(d) L_k^{-1}
    \le \frac{1}{2}
\quad \text{and} \quad
c_{1}(d) \cdot L_{k}^{5d-2} e^{- 3\alpha(p) \cdot L_{k}^{3/2}}
    \le \frac{1}{2},
\end{equation*}
where $c_0(d), c_1(d)$ are given by Lemma~\ref{lema:recurrence_inequality}.
This is always possible since both left-hand sides tend to zero when
$k \to \infty$. 
Suppose there is some $k(p, \lambda, d) \geq k_0$ satisfying the
inequality in~\eqref{eq:inductive_step}. 
Then, we can write
\begin{align*}
\smash{\frac{q_{k+1}(p, \lambda, d)}{L_{k+1}^{-2d}}}
    &\le c_{0}(d) \cdot L_{k}^{d-1+3d}  q_k(p, \lambda, d)^2 +
        c_{1}(d) \cdot L_{k}^{2d - 2 + 3d}
        e^{- 3\alpha(p) \cdot L_{k}^{3/2}} \\
    &\le c_{0}(d) \cdot L_{k}^{4d-1} L_{k}^{-4d} +
        c_{1}(d) \cdot L_{k}^{5d - 2}
        e^{- 3\alpha(p) \cdot L_{k}^{3/2}} \\
    &\le 1,
\end{align*}
by our choice of $k_0$.
This means that the inequality in \eqref{eq:inductive_step}
carries on to the next scale $k+1$. The result follows by induction.
\end{proof}

\subsection{Phase transition for alignment percolation}
\label{sub:phase_transition_for_alignment_percolation}

We first consider the case $d \geq 3$, in which no renormalization is needed.
Indeed, a construction that appeared in~\cite{hilario2019bernoulli} fits
very nicely to this case.

\begin{proof}
[Proof of Theorem~\ref{teo:lambda_critical_non_trivial}, case $d \geq 3$]
We show that it is possible embed the hexagonal lattice in $\ZZ^d$ in such
a way that the state of their edges are independent. 
Notice that it suffices to prove the claim on the case $d=3$,
since for larger $d$ one can run the same argument on the subset
$\{x \in \ZZ^d;\; x_i = 0, \forall\  4 \le i \le d\}$.

Thus, let us consider $d =3$ and for a fixed $k \in \ZZ$ we write
\begin{equation*}
    V_1 := \big\{(x,y,z) \in \ZZ^3;\; x+y+z = k\big\}
    \quad \text{and} \quad
    V_2 := \big\{(x,y,z) \in \ZZ^3;\; x+y+z = k+1\big\}.
\end{equation*}
\begin{figure}[ht]
\centering
\begin{tikzpicture}[scale=.6,
    x={(210:1cm)}, y={(330:1cm)}, z={(0cm, 1cm)},
    plane6/.style={fill, red, minimum size=4pt, outer sep=0pt,
                    inner sep=0pt, circle},
    plane7/.style={fill, blue, minimum size=4pt, outer sep=0pt,
                    inner sep=0pt},
    ]
\draw[->] (6,0,0) -- (7,0,0);
\draw[->] (0,6,0) -- (0,7,0);
\draw[->] (0,0,6) -- (0,0,7);

\foreach \z in {0, ..., 5}{
    \pgfmathsetmacro\end{5-\z}
    \foreach \x in {0, ..., \end}{
        \draw (\x, 5-\x-\z, \z) {++ (0,1,0)  -- ++(0,0,1) -- ++(0,-1,0) 
         -- ++(1,0,0) -- ++(0,0,-1) -- ++(0,1,0) -- cycle};
         \draw[dashed] (\x, 5-\x-\z, \z) -- ++(1,1,0);
         \draw[dashed] (\x, 5-\x-\z, \z) -- ++(0,1,1);
         \draw[dashed] (\x, 5-\x-\z, \z) -- ++(1,0,1);
    };
};

\foreach \z in {0, ..., 6}{
    \pgfmathsetmacro\end{6-\z}
    \foreach \x in {0, ...,\end}{
        \node[plane6] at (\x, 6-\x-\z, \z) {};
    };
};

\foreach \z in {1, ..., 6}{
    \pgfmathsetmacro\end{7-\z}
    \foreach \x in {0, ...,\end}{
        \node[plane7]
        at (\x, 7-\x-\z, \z) {};
    };
};
\foreach \x in {1, ..., 6}{
    \node[plane7] at (\x, 7-\x, 0) {};
};
\end{tikzpicture}
\caption[Copy of the hexagonal lattice with the property that
        every edge is independent.]
    {
    Copy of the hexagonal lattice with
    the property that every edge is independent. In the picture above, we used
    $k=6$ to obtain
    $V_1 = \{%
    \begin{tikzpicture}
        \fill[red] (0,0) circle (2pt);
    \end{tikzpicture}
    \}$ and
    $V_2 = \{%
    \begin{tikzpicture}%
        \draw[fill, blue] (0,0) rectangle (1ex, 1ex);
    \end{tikzpicture}
    \}$. Solid lines represent edges of the subgraph and dashed lines are
    present to help visualizing the lattice as a subset of $\RR^3$.
    }
\label{fig:hexagonal_lattice_immersion}
\end{figure}

The desired subgraph of $\LL^3$ is obtained by considering the vertex set $V_1 \cup V_2$ and the edges $uv$ with $u \in V_1$, $v \in V_2$ and $\norm{u-v}_\infty = 1$.
As shown in Figure~\ref{fig:hexagonal_lattice_immersion}, this defines a subgraph of $\mathbb{Z}^3$ that is isomorphic to the hexagonal lattice.
Under the measure $P^{\omega}_{\lambda}$, the state of each edge is independent since a line parallel to one of the canonical directions intersects the planes $V_1$ and $V_2$ in precisely one point each.
Therefore, the alignment percolation restricted to such a graph is just independent Bernoulli percolation whose critical point equals $p^{\hexagon} = 1- 2\sin(\pi/8)$.
\end{proof}

We now focus on the case $d=2$.
Consider the dual graph $\dual{\LL^{2}}$ of $\LL^{2}$, defined as the graph with vertex set $\ZZ^{2} + (1/2, 1/2)$ and edge set $\dual{\EE^{2}}$ connecting sites at Euclidean distance~$1$.
Therefore, $\dual{\LL^{2}}$ and $\LL^{2}$ are isomorphic.
Moreover, to each edge $e^*$ in $\dual{\EE^{2}}$ it corresponds a unique edge $e$ in $\EE^{2}$, so that $e$ and $e^*$ intersect at right angle.
Given any configuration $\xi \in \Xi$, we denote $\cO^{\ast}$ (resp.\ $\cC^{\ast}$) the set of dual edges whose corresponding primal edge is open (resp.\ closed). 
Notice however that, unlike when we consider independent bond percolation, for alignment percolation the distributions of $\cO^{\ast}$ and $\cO$ are not the same. 
Notice also that any event can be defined in terms of the statuses of either the primal or the dual edges.

Define for $x \in \ZZ^2$ events
\begin{equation*}
A_k(x) := \big\{
    \text{there is an open circuit on $B(x, 10L_k) \setminus B(x, L_k)$
    surrounding $B(x, L_k)$}
\big\}^{c},
\end{equation*}
which can be seen as the event on which there exists a dual path of $\cC^{\ast}$ edges from the inside of $B(x, L_k)$ to the outside of $B(x, 10L_k)$.
The key fact is that the events $A_{k}(x)$ satisfy Properties \textbf{P1} and \textbf{P2}, allowing us to use Lemmas~\ref{lema:recurrence_inequality} and~\ref{lema:step_s2}.

\begin{lema}[Trigger for Theorem~\ref{teo:lambda_critical_non_trivial}]
\label{lema:step_s3_transition}
For any $p \in (0,1)$ there are $k_0(p) > 0$ and $\lambda_0(p) \in (0, 1)$
such that for every $\lambda \geq \lambda_0$ and $k \geq k_0$
\begin{equation*}
    q_k(p, \lambda) \le L_k^{-4}.
\end{equation*}
\end{lema}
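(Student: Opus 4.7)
The plan is to set up the trigger for the inductive scheme by hand at a single scale. First I would fix $k_0$ at least as large as the threshold $k_0(p, 2)$ coming from Lemma~\ref{lema:step_s2}, which depends only on $p$. The task then reduces to finding $\lambda_0 = \lambda_0(p) \in (0,1)$ such that $q_{k_0}(p, \lambda_0) \leq L_{k_0}^{-4}$, since Lemma~\ref{lema:step_s2} automatically propagates the bound to every $k \geq k_0$, and monotonicity in $\lambda$ (the event $A_k(o)$ is decreasing in $\sigma$ under the coupling discussed in Section~\ref{sec:properties}) extends the conclusion to every $\lambda \in [\lambda_0, 1]$.

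For the base-case estimate I would use a crude but robust sufficient condition. Writing $B = B(o, 10L_{k_0})$, the event $A_{k_0}(o)^c$ contains the event $E$ that every edge of $\EE^2$ with both endpoints in $B$ is open, since on $E$ there is trivially an open circuit in $B \setminus B(o, L_{k_0})$ surrounding $B(o, L_{k_0})$. Conditionally on the site configuration $\omega$, each such edge is open iff the unique feasible pair of $F(\eta)$ whose segment contains it is declared open, and these declarations are independent Bernoulli$(\lambda)$. Denoting by $T(\omega)$ the (random) number of feasible pairs whose segment meets $B$, we obtain $\PP_{p,\lambda}(E \mid \omega) \geq \lambda^{T(\omega)}$ almost surely. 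The key combinatorial fact is that
\begin{equation*}
T(\omega) \leq C L_{k_0}^{2} \quad \text{uniformly in } \omega,
\end{equation*}
because there are $O(L_{k_0})$ axis-parallel lines meeting $B$ and on each such line $\ell$ the feasible segments that intersect $B$ are indexed by gaps between consecutive occupied sites on $\ell$ that cover some portion of $\ell \cap B$, of which there are at most $|\ell \cap B| + 1 = O(L_{k_0})$. Crucially, a single very long feasible pair on a sparsely occupied line still contributes only $O(1)$ per line to this count, so the bound is truly deterministic. Hence
\begin{equation*}
q_{k_0}(p, \lambda) \leq 1 - \PP_{p,\lambda}(E) \leq 1 - \lambda^{C L_{k_0}^{2}},
\end{equation*}
and since the right-hand side tends to $0$ as $\lambda \to 1^{-}$ while $L_{k_0}^{-4}$ is a fixed positive number, one may pick $\lambda_0(p) \in (0,1)$ so that this upper bound is at most $L_{k_0}^{-4}$, triggering the induction.

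The remainder is bookkeeping. Lemma~\ref{lema:step_s2} applied with $d = 2$ and $k_1 = k_0$ immediately yields $q_k(p, \lambda_0) \leq L_k^{-4}$ for every $k \geq k_0$, and monotonicity of $A_k(o)$ in $\lambda$ upgrades this to $q_k(p, \lambda) \leq L_k^{-4}$ for every $\lambda \geq \lambda_0$. There is no real difficulty in the argument; the only point that requires care is the combinatorial bound on $T(\omega)$, where one must verify that the count does not blow up either when $\omega$ places many occupied sites inside $B$ (each additional occupied site adds one pair per line but removes the long one) or when $\omega$ is sparse and the occupied endpoints of some feasible pair crossing $B$ are far outside it.
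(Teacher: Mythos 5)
Your proposal is correct and follows essentially the same route as the paper: force the trigger at scale $k_0$ by noting that on the event that all edges inside $B(o,10L_{k_0})$ are open the circuit exists, bounding the number of relevant feasible pairs deterministically (the paper bounds it by the number of lattice edges touching the box, you count per line, both give $O(L_{k_0}^2)$), choosing $\lambda_0$ close to $1$, and propagating with Lemma~\ref{lema:step_s2}. The only cosmetic difference is that the paper dispenses with the coupling-based monotonicity in $\lambda$ by observing directly that the bound $1-\lambda^{N(k_0)}$ is decreasing in $\lambda$.
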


\begin{proof}
Let $k_0(p)$ be given as in Lemma~\ref{lema:step_s2}.
Now, let us check that we can take $\lambda$ sufficiently close to
$1$ in order to ensure that $q_{k_0}(p, \lambda) \le L_{k_0}^{-4}$.
Let $N(k_0)$ be the total number of edges from $\EE^d$ with some extremity in $B(o, 10 L_{k_0})$ and define $\lambda_0 = (1-L_{k_0}^{-4})^{N(k_0)^{-1}}$.
Notice that we can write
\begin{equation*}
q_{k_0}(p, \lambda)
    = \PP_{p, \lambda}(A_{k_0}(o))
    \le 1 - \EE_{p, \lambda} \bigl[ \PP_{p, \lambda} (\text{all edges inside
    $B(o, 10 L_{k_0})$ are open} \mid \eta) \bigr].
\end{equation*}
Since $k_0$ is fixed, the number of edges of $F(\eta)$ that 
have some extremity inside $B(o, 10 L_{k_0})$ is bounded from above by $N(k_0)$. 
Thus, we have for every $\lambda\geq \lambda_0$,
\begin{equation*}
q_{k_0}(p, \lambda)
    \le 1 - \lambda^{N(k_0)} \leq 1-\lambda_0^{N(k_0)} = L_{k_0}^{-4}.
\end{equation*}
The result follows from Lemma~\ref{lema:step_s2}.
\end{proof}

\begin{proof}
[Proof of Theorem~\ref{teo:lambda_critical_non_trivial}, case $d=2$]
We now show that for any $\lambda \geq \lambda_0(p)$ we have that $\cO$  percolates $\PP_{p, \lambda}$-a.s. 
Since we are working on the plane, we have that if $\cO$ does not percolate there must be a sequence $\gamma_n$ of disjoint circuits in $\dual{\LL^{2}}$ that go
around the origin with $\gamma_n \subset \cC^{\ast}$ and
\begin{equation}
\label{eq:circuits_gamma_n}
    \dist\big(o, \gamma_n \cap (\RR^{+} \times \{0\})\big) \to \infty.
\end{equation}
For each $k \geq 0$ consider the points
$\{x_{k, i}\} \subset \RR^{+} \times \{0\}$ defined by
\begin{equation*}
    x_{k, i} := (\lceil 10 L_{k} \rceil + (i-1) 2 \lfloor L_{k} \rfloor,\ 0)
\quad \text{for $1 \le i \le s_k$},
\end{equation*}
where $s_k$ is determined below. Notice that
\begin{equation*}
    B(x_{k, i}, 10 L_k) \subset \RR^{+} \times \RR
    \quad \text{and} \quad
    \text{boxes $B(x_{k, i}, L_k)$ and $B(x_{k, i+1}, L_k)$ are adjacent.}
\end{equation*}
Choosing $s_k$ as the minimum index $i$ such that $x_{k, i} \geq x_{k+1, 1}$ ensures that the family of boxes $\{B(x_{k, i},L_k);\; k \geq 0, 1 \le i \le s_k\}$ covers the half-line
$(\lfloor 10 L_0 \rfloor, \infty) \times \{0\}$.
Moreover, the sequence $s_k$ satisfies
\begin{equation*}
s_k \le 5 \cdot \frac{L_{k+1}}{L_k} = 5 L_{k}^{1/2}.
\end{equation*}
Choose some ordering $\tilde{A}_l$ for the collection of events
$\{A_{k}(x_{k, i});\; k \geq 0, 1 \le i \le s_k\}$.
The existence of the sequence of circuits $\gamma_n \subset \cC^{\ast}$
satisfying \eqref{eq:circuits_gamma_n} implies
\begin{equation*}
\PP_{p, \lambda} \big(\{\cO\ \text{percolates}\}^{c}\big)
    \le \PP_{p, \lambda} (\tilde{A}_l,\ \text{i.o.}).
\end{equation*}
However, we have by Lemma~\ref{lema:step_s3_transition} that
\begin{equation*}
\sum_{l \geq 1} \PP_{p, \lambda} (\tilde{A}_l)
    = \sum_{k\geq 0} \sum_{i=1}^{s_k} q_k(p, \lambda)
    \le \sum_{k \geq 0} 5 \cdot L_{k}^{-7/2}
    < \infty
\end{equation*}
and Borel-Cantelli lemma implies that
$\PP_{p, \lambda} (\tilde{A}_l,\ \text{i.o.}) = 0$. We conclude that for
$\lambda \geq \lambda_0(p)$ the set $\cO$ percolates $\PP_{p, \lambda}$-a.s.
\end{proof}

\begin{figure}
\centering
\begin{tikzpicture}[scale=.13]
 \draw (-1, 0) -- (82,0) node[above,pos=.96] {$...$};
 \draw (0,1) -- (0,-1) node[below] {$o$};
 \draw (10.5,1) -- (10.5,-1) node[below] {$10 L_{0}$};
 \clip (-2,-20) rectangle (80,20);
 
 \begin{scope}[shift={(10,0)}]
 \foreach \x in {0, 1, 2}{
   \foreach \y in {0,...,10}{
   \begin{scope}[shift={({9*(pow(3,\x) - 1)/2},{-pow(3,\x)/2})}]
    \draw ( {\y * pow(3,\x)} , 0) rectangle ({ (\y + 1)* pow(3,\x)}, {pow(3,\x)});
   \end{scope}
   }
 } 
 \end{scope}

\fill[black!50, opacity = .5] (34, -1.5) rectangle (37, 1.5);
\draw[dashed] (22.5, -13) rectangle (48.5, 13);
 
 \draw[thick] plot [smooth cycle] coordinates {(-10,0) (0,-18) (10,-15) (17,0) (10,8) (0,15)};
 \node at (6,15) {$\gamma_1$};
 \draw[thick] plot [smooth cycle] coordinates {(-20,0) (0,-40) (20,-30) (35,0) (20,45) (0,30)};
 \node at (35,15) {$\gamma_2$};
  \draw[thick] plot [smooth cycle] coordinates {(-30,0) (0,-60) (30,-45) (54,0) (30,40) (0,45)};
 \node at (52,15) {$\gamma_3$};
\end{tikzpicture}
\caption{Multiscale renormalization for
    Theorem~\ref{teo:lambda_critical_non_trivial}, case $d=2$. If $\cO$ does not percolate,
    circuits $\gamma_n \subset \cC^{\ast}$ must intersect boxes $B(x_{k,i}, L_k)$
    with large $k$, as exemplified by the highlighted box.}
\label{fig:multiscale_renormalization}
\end{figure}
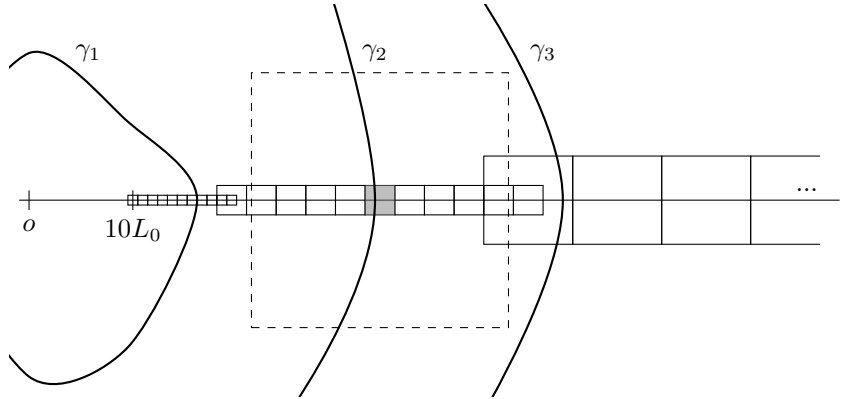

\subsection{\texorpdfstring{Continuity at $p=1$}{Continuity at p=1}}
\label{sub:continuity_at_p_1}

We now employ a similar renormalization argument to prove
Theorem~\ref{teo:continuity_at_1}.
The sequence of scales we choose is still~\eqref{eq:sequence_scales}, 
but now we consider events
\begin{align*}
A_k(x) 
    &= \big\{
    \text{$B(x, L_k)$ is connected to $\partial B(x, 10L_{k})$ by edges in $\cO$}
    \big\} \\
    &=: \big\{B(x, L_k) \leftrightarrow \partial B(x, 10L_{k})\big\}.
\end{align*}
Once again, it is straightforward to check that events $A_{k}(x)$ satisfy
properties \textbf{P1} and \textbf{P2}.

\begin{proof}[Proof of Theorem~\ref{teo:continuity_at_1}]
Fix $\lambda < \pcb(d)$. We first need to complete the trigger for
events $A_{k}(x)$. We can assume without loss of generality that
$p \in [1/2, 1]$ and notice that we can adapt the argument in
Lemma~\ref{lema:step_s2} to get $k_0$ depending only on $d$,
since the error term
in \eqref{eq:recurrence_inequality_both} is decreasing in $p$ as can be shown by 
using the fact that $\alpha(p) := - \log (1-p) > 0$, as given by Lemma \ref{lema:decay_correlations}.
Denoting by $V = V(d, k)$ the set of vertices of
$B(o, 10 L_{k})$ and by $\PPb_{\lambda}$ the independent
bond percolation measure on $\LL^{d}$, we can write
\begin{align*}
q_{k}(p, \lambda, d)
    &\le \PP_{p, \lambda}( \xi(V) \not\equiv 1) +
        \PP_{p, \lambda}
        \bigl(
            \xi(V) \equiv 1,
            B(o, L_{k}) \leftrightarrow \partial B(o, 10 L_{k})
        \bigr) \\
    &= (1 - p^{|V|}) + p^{|V|} \cdot
        \PPb_{\lambda}
        \bigl(
        B(o, L_{k}) \leftrightarrow \partial B(o, 10 L_{k})
        \bigr).
\end{align*}
Since $\lambda < \pcb(d)$ we have exponential decay for the
radius of an open cluster, cf.
\cite[Theorem~5.4]{grimmett1999percolation}.
Thus, there is a positive constant $\psi(\lambda)$ such that
\begin{equation*}
\PPb_{\lambda}
\bigl(
B(o, L_{k}) \leftrightarrow \partial B(o, 10 L_{k})
\bigr)
\le |\partial B(o, L_{k})| \cdot
        \PPb_{\lambda}\bigl(0 \leftrightarrow \partial B(o, 9L_{k})\bigr)
\le c(d)L_k^{d-1} \cdot \exp[- \psi(\lambda) L_{k}].
\end{equation*}
Let us now pick $\tilde{k}_0(\lambda, d) > 0$ such that
\begin{equation*}
c(d)L_k^{d-1} \cdot \exp[- \psi(\lambda) L_{k}]
    \le \tfrac{1}{2} L_{k}^{-2d}
    \quad \text{for $k \geq \tilde{k}_0(\lambda, d)$}.
\end{equation*}
Taking $k_1(\lambda, d) = \max\{\tilde{k}_0(\lambda, d), k_0(d)\}$, we have
\begin{equation}
\label{eq:trigger_s3_continuity_2}
q_{k_{1}}(p, \lambda, d)
    \le \big(1 - p^{|V(k_1)|}\big) + \tfrac{1}{2} L_{k_1}^{-2d}.
\end{equation}
Since $\lambda$ and $d$ are fixed we can pick $p_0=p_0(\lambda, d)$
such that $1 - p^{|V(k_1)|} \le \tfrac{1}{2} L_{k_1}^{-2d}$,
for every $p \geq p_0(\lambda, d)$.
Plugging into \eqref{eq:trigger_s3_continuity_2} we get
\begin{equation}
\label{eq:trigger_s3_continuity}
q_{k_{1}}(p, \lambda, d) \le L_{k_{1}}^{-2d}
    \quad \text{for $p \geq p_0(\lambda, d)$},
\end{equation}
concluding the trigger step. This implies that there is no percolation
for these values of $p, \lambda$ and $d$.
In fact, from \eqref{eq:trigger_s3_continuity} and Lemma~\ref{lema:step_s2}
we get we get $q_{k}(p, \lambda, d) \le L_{k}^{-2d}$ whenever $k \geq k_1$
and since $\theta(p,\lambda,d) \leq \lim_{k\to\infty} q_k(p,\lambda,d) =0$
we have
\begin{equation}
\label{eq:continuity_at_1_inside}
\theta(p, \lambda, d) = 0 \quad \text{for $p \geq p_0$}, 
\end{equation}
that implies~\eqref{eq:continuity_at_1}. In other words, this means that
for any $\lambda < \pcb(d)$ the critical curve restricted to the interval 
$[p_0(\lambda, d), 1]$ must be above the horizontal segment of height
$\lambda$.

On the other hand, a consequence
of~\cite[Theorem~2.4 $(iii)$]{beaton2019alignment} is that for
$\lambda > \pcb(d)$ the critical curve must be below $\lambda$ on some
interval $[\tilde{p}_0(\lambda, d), 1]$, since the curve giving the upper bound 
near $p=1$ is continuous, recall Figure~\ref{fig:phase_transition_diagram}.
The continuity of $\lambda_c(\cdot, d)$ at $p=1$ follows.
\end{proof}



\begin{thebibliography}{19}
\bibitem{baldasso2018can}
R.~Baldasso, A.~Teixeira (2018)
\textit{How can a clairvoyant particle escape the exclusion process?},
Annales de l'Institut Henri Poincar{\'e}, Probabilit{\'e}s et Statistiques.

\bibitem{beaton2019alignment}
N.~Beaton, G.~Grimmett, M.~Holmes (2019)
\textit{Alignment percolation},
preprint arXiv:1908.07203.

\bibitem{blondel18}
O.~Blondel, M.R.~Hil\'ario, A.~Teixeira (2018)
\textit{Random walks on dynamical random environment with non-uniform mixing},
arXiv:1805.09750. To appear in The Annals of Probability.

\bibitem{burton89}
R.M.~Burton, M.~Keane (1989)
\textit{Density and uniqueness in percolation},
Comm. Math. Phys. 121, no. 3, 501--505.

\bibitem{grimmett1999percolation}
G.~Grimmett (1999)
\textit{Percolation},
Die Grundlehren der mathematischen Wissenschaften in Einzeldarstellungen,
Springer.

\bibitem{grimmett2006random}
G.~Grimmett (2006)
\textit{The Random-Cluster Model},
Grundlehren der mathematischen Wissenschaften,
Springer Berlin Heidelberg.

\bibitem{hilario2019bernoulli}
M.R.~Hil\'ario, V.~Sidoravicius (2019)
\textit{Bernoulli line percolation},
Stochastic Processes and their Applications, 129 (12), pp. 5037-5072.

\bibitem{liggett2012interacting}
T.~Liggett (2012)
\textit{Interacting particle systems},
Springer Science \& Business Media

\bibitem{sznitman2010vacant}
A.-S.~Sznitman (2010)
\textit{Vacant set of random interlacements and percolation},
Annals of Mathematics, volume 171, Issue 3, pp. 2039--2087.

\bibitem{tykesson2012percolation}
J.~Tykesson, D.~Windisch (2012)
\textit{Percolation in the vacant set of Poisson cylinders},
Probability theory and related fields, 154(1-2), pp. 165--191.

\end{thebibliography}
\end{document}